\documentclass[reqno]{amsart}
\usepackage{amsmath,amsthm,amscd,amssymb,amsfonts, amsbsy}
\usepackage[usenames, dvipsnames]{color}
\usepackage{pxfonts}
\usepackage{enumerate}
\usepackage{latexsym}
\usepackage{mathrsfs}
\usepackage{xparse}
\usepackage{marginnote}
\usepackage{todonotes}
\usepackage{hyperref}

\numberwithin{equation}{section}

\usepackage{verbatim}

\allowdisplaybreaks[4]
\usepackage{todonotes}

\theoremstyle{plain}
\newtheorem{theorem}{Theorem}[section]
\newtheorem{lemma}[theorem]{Lemma}
\newtheorem{corollary}[theorem]{Corollary}
\newtheorem{proposition}[theorem]{Proposition}

\theoremstyle{definition}
\newtheorem{definition}[theorem]{Definition}
\newtheorem{condition}[theorem]{Condition}

\theoremstyle{remark}
\newtheorem{remark}[theorem]{Remark}

\DeclareMathOperator{\osc}{osc}

\DeclareMathOperator{\loc}{loc}
\DeclareMathOperator{\dv}{div}

\providecommand{\set}[1]{\{#1\}}

\providecommand{\bigset}[1]{\bigl\{#1\bigr\}}

\providecommand{\biggset}[1]{\biggl\{#1\biggr\}}
\providecommand{\Biggset}[1]{\Biggl\{#1\Biggr\}}

\providecommand{\abs}[1]{\lvert#1\rvert}

\providecommand{\bigabs}[1]{\bigl\lvert#1\bigr\rvert}

\providecommand{\ip}[1]{\langle#1\rangle}

\providecommand{\norm}[1]{\lVert#1\rVert}

\newcommand{\bR}{\mathbb{R}}

\begin{document}

\title{Gradient continuity for $p$-Laplacian obstacle problems under mean oscillation conditions}

\author[S. Lee]{Sungjin Lee}
\address[S. Lee]{Department of Mathematics, Sogang University, 35 Baekbeom-ro, Mapo-gu, Seoul 04107, Republic of Korea}
\email{sungjinlee@sogang.ac.kr}
\thanks{S. Lee was supported by the BK21 FOUR (Fostering Outstanding Universities for Research) funded by the Ministry of Education (MOE, Korea) and National Research Foundation of Korea (NRF) with grant No.4120240415042, and by the National Research Foundation of Korea(NRF) grant funded by the Korea government(MSIT) (RS-2025-24533680).}

\author[J. Ok]{Jihoon Ok}
\address[J. Ok]{Department of Mathematics, Sogang University, 35 Baekbeom-ro, Mapo-gu, Seoul 04107, Republic of Korea}
\email{jihoonok@sogang.ac.kr}
\thanks{J. Ok was supported by the National Research Foundation of Korea(NRF) grant funded by the Korea government(MSIT) (NRF-2022R1C1C1004523).}

\subjclass[2020]{Primary 35J92 ; Secondary 35R35, 35B65, 49J40}
\keywords{$p$-Laplacian; Obstacle problem; Dini Mean Oscillation; $C^1$-regularity; variational inequality }

\begin{abstract}
We establish the $C^1$-regularity of solutions to the obstacle problems associated with $p$-Laplacian type equations, where $1<p<\infty$. Specifically, we prove that the gradient of the solution is continuous under a Dini mean oscillation ($\mathsf{DMO}$) type condition on the data, which includes the coefficient matrix, the source term, and the obstacle function. This result relaxes the classical Dini continuity assumption on the data to a more general mean oscillation condition.
\end{abstract}

\maketitle

\section{Introduction and Main Result} \label{sec1}

We study the regularity of the gradient of the solution to the obstacle problem associated with the following $p$-Laplacian equation for $1<p<\infty$:
\begin{equation} \label{main_eq}
-\dv ( \ip{A(x)Du, Du }^{\frac{p-2}{2}} A(x) Du)=-\dv F \quad\text{in }\ \Omega,
\end{equation}
where $\Omega \subset \mathbb{R}^n$ ($n\ge 2$) is an open set, $A:\Omega\to M_n(\mathbb{R})$ is a symmetric matrix-valued function satisfying the ellipticity condition; that is, there exists $\Lambda>1$ such that
\begin{equation} \label{elliptic}
\begin{split}
\Lambda^{-1}\abs{\xi}^2\leq \ip{A(x)\xi,\xi}&\leq \Lambda\abs{\xi}^2,\quad\forall x\in \Omega\ \ \text{and}\ \ \forall\xi\in\bR^n,
\end{split}
\end{equation}
and $F\in L^p(\Omega)$.
Given an obstacle function $\psi \in W^{1,p}(\Omega)$, we define the admissible set
\[
\mathcal{A}_\psi:=\set{f\in W^{1,p}(\Omega)~:~f\geq\psi~\text{a.e. in $\Omega$}}.
\]

We say that $u\in \mathcal{A}_\psi$ is a solution to the variational inequality
\begin{align} \label{main_vari ineq}
\int_\Omega \ip{A(x) Du,Du}^{\frac{p-2}{2}} A(x)Du\cdot D(\varphi-u)  \,dx \geq \int_\Omega F\cdot D(\varphi-u) \,dx
\end{align}
for all $\varphi\in\mathcal{A}_\psi$ such that $\varphi-u$ has compact support in $\Omega$. We note that $u\in \mathcal{A}_\psi$ solves the variational inequality \eqref{main_vari ineq} if and only if it is a minimizer of the energy functional
\[
w\in \mathcal{A}_\psi~\mapsto~ \int_\Omega\big( \ip{A(x)Dw,Dw}^{\frac{p}{2}}-pF\cdot Dw \big) \,dx.
\]

Regularity theory for the gradient of the solution to the variational inequality \eqref{main_vari ineq} has been actively studied. We refer to, for instance, \cite{Choe91,ChoeLewis91,Fuchs90}, \cite{BDM11,BCW12}, and \cite{Ok17} for results on H\"older continuity, $L^q$-integrability, and continuity, respectively, as well as the references therein.

The continuity of the gradient of solutions, known as $C^1$-regularity, is regarded as a borderline regularity between H\"older continuity and $L^q$-integrability. In the linear case ($p=2$) without an obstacle, i.e.,
 \begin{equation}\label{eq_linear}
 -\dv (A(x) Du)= -\dv F,
 \end{equation}
it is a classical and well-known result that if $A$ and $F$ are Dini continuous (see Definition~\ref{def:Dini}), then the gradient of weak solution $Du$ is continuous; see, e.g., \cite{HW55,Burch78}. This result is sharp, as demonstrated by \cite{JMV09}, who constructed a counterexample showing that $Du$ is not necessarily locally bounded even when $A$ is continuous.

For $p$-Laplacian type problems, Kuusi and Mingione \cite{KM14} considered the following $p$-Laplace system with coefficients:
$$
-\,\mathrm{div} \left(a(x)|Du|^{p-2}Du\right) = f
\quad \text{in }\ \Omega, 
\quad u, f:\Omega \to \mathbb{R}^N,
$$
and showed that if the coefficient function $a$ is Dini continuous and $f$ belongs to the Lorentz space $L^{n,1}$, then the gradient $Du$ is continuous. We also refer to \cite{DM10,KM13,Mingione11} for pointwise potential estimates for the gradient of weak solutions in the scalar case $N=1$, which imply $C^1$-regularity results. Furthermore, we refer to \cite{KM12} for the local boundedness of the gradient in parabolic $p$-Laplace equations with divergence data as in \eqref{main_eq}, to \cite{Baroni23,Baroni25,Ok15} for nonlinear problems with nonstandard growth, and to \cite{BMK14,Ok17-1} for variational problems. Finally, regarding the variational inequality \eqref{main_vari ineq} with $F=|G|^{p-2}G$, the second author \cite{Ok17} proved that if $G$ and $D\psi$ are $\theta_p$-Dini continuous with $\theta_p:= \min\{p-1,p'/2\}\in (0,1]$ (see Definition~\ref{def:Dini}), where $p':=\frac{p}{p-1}$, then $Du$ is continuous.

In $C^1$-regularity theory for divergence type problems, it has been a very interesting question whether the Dini condition for the modulus of continuity of the data can be replaced by a condition on their mean oscillation. In the linear case \eqref{eq_linear}, Dong and Kim \cite{DK17} showed that if $A$ and $F$ satisfy the $\mathsf{DMO}$ condition (see Definition~\ref{def:Dini}), then $Du$ is continuous. See also \cite{DEK18, YanyanLi17, DEK21, JMV09,  DK21} for related results. Recently, H\"ast\"o, Lee, and the second author of the present paper \cite{HLO25} considered the following very general class of nonlinear homogeneous equations of the form
\[
\dv \mathbf{a}(x,Du) =0,
\]
which includes $p$-Laplace equations with coefficients, $p(x)$-Laplace equations, and double phase equations, and showed gradient continuity under a Dini condition for a certain mean oscillation of the nonlinearity $\mathbf{a}(x,\xi)$. Specifically, if $\mathbf{a}(x,\xi)=a(x)|\xi|^{p-2}\xi$, the condition on $\mathbf{a}(x,\xi)$ in \cite{HLO25} means that $\omega^{(q)}_a$ (see \eqref{eq:Lpmeanoscillation})  for some $q>2$ satisfies the Dini condition.

In this paper, we focus on variational inequalities, i.e., obstacle problems. Instead, we consider a rather simpler structure: the $p$-Laplacian with a coefficient matrix $A$, and obtain the following $C^1$-regularity result under a mean oscillation type condition on the $\mathsf{data}$: coefficients, source term, and obstacle. 

\begin{theorem} \label{main_theorem}
Let $1<p<\infty$, $A$ be a symmetric $n\times n$ matrix-valued function on $\Omega$ satisfying \eqref{elliptic}, $F\in L^p(\Omega)$, $\psi\in W^{1,p}(\Omega)$, and $u \in W^{1,p}(\Omega)$ be a solution to the variational inequality \eqref{main_vari ineq}. Suppose $A$, $F$, and $\psi$ satisfy the Dini mean oscillation condition (Condition~\ref{datacondi}). Then $Du$ is continuous in $\Omega$.
\end{theorem}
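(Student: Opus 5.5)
We plan to follow the comparison/perturbation scheme used for $\mathsf{DMO}$ problems (Dong--Kim, and \cite{HLO25}), adapted to the obstacle setting as in \cite{Ok17}. The quantity we iterate is an excess of the \emph{vector field} $V(Du):=\ip{ADu,Du}^{\frac{p-2}{4}}A^{1/2}Du$, or, more conveniently, of $|Du|^{\frac{p-2}{2}}Du$. On balls $B_r(x_0)\Subset\Omega$ set
\[
\phi(x_0,r):=\inf_{\xi\in\bR^n}\Bigl(\fint_{B_r(x_0)}|V(Du)-\xi|^{\gamma}\,dx\Bigr)^{1/\gamma}
\]
with a small exponent $\gamma\in(0,1)$. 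The small exponent is what makes $\mathsf{DMO}$ (an $L^1$-type oscillation) usable, through weak-type estimates. The goal is a Campanato-type decay
\[
\phi(x_0,\kappa r)\le \tfrac12\phi(x_0,r)+C\,\tilde\omega(r)\,\bigl(\text{local bound on }|Du|\bigr),
\]
where $\tilde\omega$ is built from $\omega_A,\omega_F,\omega_{D\psi}$ and is Dini. Summing over dyadic radii then gives two things: a uniform $L^\infty$ bound on $Du$, and the Cauchy property of the minimizing $\xi$'s uniformly in $x_0$. Together these give continuity of $Du$.

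\textbf{Step 1 (freezing and comparison).} Fix $B_r=B_r(x_0)$ and let $\bar A:=\fint_{B_r}A$ and $\bar F:=\fint_{B_r}F$. Since the obstacle only enters through $D\psi$, take the affine function $\ell(x)=\psi(x_0)+\ip{\overline{D\psi},x-x_0}$ with $\overline{D\psi}=\fint_{B_r}D\psi$. Let $w$ minimize the frozen functional $\int_{B_r}\ip{\bar ADw,Dw}^{p/2}$ among $w\ge \ell$ with $w=u$ on $\partial B_r$; if $\psi\le\ell$ fails we instead use $\max\{u,\ell\}$-type boundary data. Because $\ell$ is affine, $w-\ell$ solves an obstacle problem for a constant-coefficient $p$-Laplacian with zero obstacle. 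Classical results give $C^{1,\beta}$ regularity with excess decay for $V(Dw)$, and since $\bar A$ is constant this holds uniformly in $\Lambda$. Alternatively, $w$ can be compared with an unconstrained $\bar A$-$p$-harmonic function on the contact/noncontact structure as in \cite{Ok17}.

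\textbf{Step 2 (comparison estimate).} Test the variational inequality for $u$ and the one for $w$ with suitable competitors, e.g.\ $\max\{w,\psi\}$-modifications, which are admissible up to the difference $\psi-\ell$. This gives
\[
\fint_{B_r}|V(Du)-V(Dw)|^2\lesssim \omega_A(r)$-type terms $\cdot\fint|Du|^p+\fint|F-\bar F|^{p'}+\fint|D\psi-\overline{D\psi}|^{p}\cdots
\]
However, $\mathsf{DMO}$ controls only $L^1$ oscillations, so this must be recast as a weak-type $L^1$ estimate. The plan is to decompose via $A$-frozen problems with data $(A-\bar A)|Du|^{p-2}Du$, $F-\bar F$, and $\psi-\ell$, then invoke a weak-$(1,1)$ estimate for the linearized/frozen problem. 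This yields $\bigl(\fint|V(Du)-V(Dw)|^\gamma\bigr)^{1/\gamma}\lesssim \omega(r)\,\|Du\|_{L^\infty(B_r)}^{\cdot}$. The exponents $\theta_p$ from \cite{Ok17} appear here when converting between $V$ and $Du$ for $p<2$ versus $p>2$.

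\textbf{Step 3 (iteration).} Combine Step 1's decay for $w$ with Step 2 to get the recursion above. First assume a priori $Du\in L^\infty$, via approximation of the data by smooth ones, for which the result is known from \cite{Ok17}. Iterating gives $\|Du\|_{L^\infty(B_{r/2})}\lesssim \fint_{B_r}|Du|+\int_0^r\tilde\omega(t)\,dt/t$, with constants independent of the approximation. Then pass to the limit.

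\textbf{Main obstacle.} The key difficulty is Step 2: obtaining an $L^\gamma$/weak-type comparison estimate for a \emph{nonlinear} variational inequality with only $L^1$-mean oscillation of $A$, $F$, $D\psi$. The energy method naturally gives $L^2$ or $L^p$ oscillations, which are not Dini-summable under $\mathsf{DMO}$. I would first try to use the $L^q$-oscillation $\omega^{(q)}$ from \eqref{eq:Lpmeanoscillation}, which is controlled by the $L^1$ one for bounded $A$ since $|A-\bar A|\le 2\Lambda$. This gives $\omega^{(q)}_A\lesssim (\omega_A)^{1/q}$, but that loses Dini-ness, so the route must go through a weak-type estimate with a linear structure. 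Presumably Condition~\ref{datacondi} is formulated precisely to make this step work, e.g.\ as a Dini condition on suitable powers of the oscillations of $F$ and $D\psi$.
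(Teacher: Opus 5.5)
\textbf{There is a genuine gap in your Step~2, and it comes from misreading the hypothesis.}

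You need an $L^\gamma$/weak-type $(1,1)$ comparison between $V(Du)$ and $V(Dw)$ for a degenerate nonlinear \emph{variational inequality}, and no such estimate is available. First, $u$ satisfies only an inequality. Second, even for an equation, $u-w$ solves a linear problem only with coefficients $\int_0^1 D_\xi\mathbf a(x,tDu+(1-t)Dw)\,dt\approx(|Du|+|Dw|)^{p-2}$. These degenerate or blow up, and their mean oscillation is not controlled. So the Dong--Kim machinery (weak-$(1,1)$ bounds plus the method of continuity for linear operators) has nothing to act on. The paper says explicitly that these tools are lost for $p\neq 2$.

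You do not need them, because Condition~\ref{datacondi} is not an $L^1$-$\mathsf{DMO}$ assumption. It requires $\omega_A^{(2+\kappa)}$ itself to be Dini. For $|D\psi|^{p-2}D\psi$ and $F$ it requires $\omega^{(p')}(\cdot)^{p'/2}$ (if $p>2$) or $\omega^{(2+\kappa)}$ (if $p\le 2$) to be Dini. With this, the plain energy comparison closes, e.g.
\[
\fint_{B_r}|A-\bar A^{x_0,r}|^2(|Du|+|Dw|)^p\,dx\le \omega_A^{(2+\kappa)}(r,x_0)^2\Bigl(\fint_{B_r}(|Du|+|Dw|)^{\frac{p(2+\kappa)}{\kappa}}\,dx\Bigr)^{\frac{\kappa}{2+\kappa}}.
\]
The last factor is bounded by $(\fint_{B_{2r}}|Du|)^p+\|F\|_{L^\infty}^{p'}+\|D\psi\|_{L^\infty}^p$. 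This uses \eqref{eq0410fri1} for $w$ and the Calder\'on--Zygmund estimates of Lemma~\ref{lem:CZ_variational inequality}, which hold for every $q$ because the data are $\mathsf{VMO}$. For $F$ and $\psi$ with $p>2$, Young's inequality produces exactly $\omega^{(p')}(r,x_0)^{p'}$, which is why the power $p'/2$ appears in the condition.

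This gives Lemma~\ref{lemma0205thr1}: $\fint_{B_r}(|Du|+|Dv|)^{p-2}|Du-Dv|^2\lesssim \omega_{\mathsf{data}}(r,x_0)^2(\cdots)$. By \eqref{eq0130fri4} this is already an $L^2$ comparison for $V$, and its square root is linear in $\omega_{\mathsf{data}}$, so your small exponent $\gamma$ is unnecessary. The paper instead iterates the $L^1$ excess of $Du$. There H\"older only gives $\omega_{\mathsf{data}}^{2/p}$ for $p>2$. The linear power is recovered in the nondegenerate regime $\inf|Dv|\ge M/N$ (Lemma~\ref{lemma0327fri1}), together with a degenerate/nondegenerate alternative in the iteration.

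\textbf{Your Step~1 also fails as written.} With an affine obstacle $\ell$, neither function is admissible in the other's problem. The condition $w\ge\ell$ does not give $w\ge\psi$, and $u\ge\ell$ fails in general, so you cannot even impose $w=u$ on $\partial B_r$. Your $\max$-modifications create error terms in $\psi-\ell$, and you give no way to control them by the mean-oscillation quantities in Condition~\ref{datacondi}. Moreover, for $p\neq 2$, $w-\ell$ solves a problem for the shifted operator $\xi\mapsto \bar{\mathbf a}^{x_0,r}(\xi+\overline{D\psi})$, not a zero-obstacle $p$-Laplacian problem.

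The paper avoids obstacle-problem regularity entirely. It lets $w$ solve the \emph{equation} $-\dv\bar{\mathbf a}^{x_0,r}(Dw)=-\dv\bar{\mathbf a}^{x_0,r}(D\psi)$ in $B_r$ with $w=u$ on $\partial B_r$. Since $\psi$ solves the same equation and $u\ge\psi$, the comparison principle gives $w\ge\psi$. Hence $w$, extended by $u$, is an admissible competitor in \eqref{main_vari ineq}. Then $v$ is taken $\bar{\mathbf a}^{x_0,r}$-harmonic with $v=w$ on $\partial B_r$. The $w$--$v$ error is the mean oscillation of $\bar{\mathbf a}^{x_0,r}(D\psi)$, which by \eqref{eq0127tue3} is controlled by that of $|D\psi|^{p-2}D\psi$; this is why the hypothesis is placed on $|D\psi|^{p-2}D\psi$. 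After that, only the unconstrained excess decay of Lemma~\ref{lem:C1alpha} is needed.
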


The above theorem still holds true for the non-obstacle problem case; that is, it applies to the $p$-Laplace equation with coefficients and source term \eqref{main_eq} by removing $\psi$. 

We now highlight the novelty of our approach and the mean oscillation condition. 
In the linear case in \cite{DK17}, the $\mathsf{data}$ are assumed to be $\mathsf{DMO}$, which is weaker than Condition~\ref{datacondi}. The proof in the linear case is based on weak type-$(1,1)$ estimates for linear equations—which make it possible to work directly with the $\mathsf{DMO}$ condition of $A$ and $F$—and a priori estimates along with the method of continuity.
In contrast, for our problems \eqref{main_vari ineq} or \eqref{main_eq} with $p\neq 2$, these techniques are no longer available. Instead, we use known Calder\'on--Zygmund type $L^q$-estimates for any $q>p$ and adapt the sharp $L^1$-comparison estimates and iteration argument established in \cite{HLO25} to the variational inequality problems, which are simpler than the ones in \cite{KM14,Ok17}. In this regard, $\mathsf{DMO}_{q}$ for some $q>2$ seems to be a quite natural mean oscillation condition corresponding to the $C^1$-regularity for $p$-Laplacian problems with $p\neq 2$.

Our condition on the $\mathsf{data}$ generalizes that of \cite{Ok17} to a mean oscillation version. Moreover, regarding the regularity of the source term, we also extend the result in \cite{KM14} to a more general source term satisfying a $\mathsf{DMO}$ type condition when $1<p\le 2$.  We will discuss on the relationships between regularity conditions for $C^1$-regularity in Remark~\ref{rmk:Dinimeancond}. We would like to point out that when $p>2$, Condition~\ref{datacondi} implies that $|D\psi|^{p-2}D\psi$ and $F$ are $p'/2$-$\mathsf{DMO}_{p'}$, where $0<p'/2<1$. This is slightly stronger than our expected condition, $\mathsf{DMO}_{q}$ for some $q\geq 1$. Hence, this may be improved in the future by applying a more delicate comparison and iteration argument as in \cite{KM14}.

The paper is organized as follows. In Section \ref{sec2}, we introduce regularity assumptions on the $\mathsf{data}$ and some preliminary results.  In Section \ref{sec3}, we derive comparison estimates. In Section \ref{sec4}, we prove local boundedness of $Du$ and Theorem \ref{main_theorem}.

\section{Preliminaries}  \label{sec2}

In this section, we describe the regularity of the $\mathsf{data}$-condition, more precisely, and also present basic inequalities on the structure of the $p$-Laplacian equation \eqref{main_eq}, as well as several well-known regularity results.

\subsection{Dini Mean Oscillation}\label{subsec:Dini}
We begin by introducing Dini type conditions.
\begin{definition} \label{def:Dini}
Let $g: [0,R]\to [0,\infty)$ for some $R\in(0,1]$, $f: \Omega\to \mathbb{R}^N$ with $N\in \mathbb{N}$, $\theta\in(0,1]$, and $1\le q<\infty$.
\begin{itemize}
\item  We say that $g$ satisfies the \textit{Dini condition} if 
\[
\int_0^R g(t)\frac{dt}{t} <\infty.
\]  
\item  For $0<r \le \mathrm{diam}(\Omega)$, we denote
\[
\mathcal{O}_f(r):= \sup\{|f(x)-f(y)| : x,y\in\Omega \text{ with } |x-y|\le r\}.
\]
 We say that $f$ is $\theta$-\textit{Dini continuous} if the function $r \mapsto \mathcal{O}_f(r)^{\theta}$ satisfies the Dini condition.
 In particular, if $\theta=1$, we simply say that $f$ is Dini continuous.
 
 \item For $0<r \le \mathrm{diam}(\Omega)$ and $x\in \overline{\Omega}$, we denote
 \begin{equation}\label{eq:Lpmeanoscillation}
\omega^{(q)}_f(r, x):= \bigg(\fint_{\Omega_r(x)} \,\abs{f(y)-\bar{f}^{x,r}}^q\,dy\bigg)^{\frac{1}{q}}
\quad\text{and}\quad
\omega^{(q)}_f(r):= \sup_{x \in \overline{\Omega}} \omega^{(q)}_f(r, x),
\end{equation}
where $\Omega_r(x):= \Omega\cap B_r(x)$ and $\bar{f}^{x,r}:=\fint_{\Omega_r(x)} f$. 
 We say that $f$ is $\theta$-$\mathsf{DMO}_{q}$ (\textit{Dini Mean Oscillation}) if the function $r \mapsto \omega^{(q)}_f(r)^{\theta}$ satisfies the Dini condition.
 In particular, we say that $f$ is $\mathsf{DMO}_{q}$ if $\theta=1$, $\theta$-$\mathsf{DMO}$ if $q=1$, and simply $\mathsf{DMO}$ if $\theta=q=1$.
 \end{itemize}
\end{definition}

We provide a few remarks regarding the above definition.
\begin{remark} \label{rmk:Dini}
\begin{itemize}
\item[(i)] If $0<\theta_1< \theta_2\le 1$, then $\theta_1$-Dini continuity implies $\theta_2$-Dini continuity, and $\theta_1$-$\mathsf{DMO}_q$ implies $\theta_2$-$\mathsf{DMO}_q$. Dini continuity implies $\mathsf{DMO}_q$ for any $1\le q <\infty$. Note that the converse does not hold; see \cite{DK17} for an example.
\item[(ii)] $\theta$-$\mathsf{DMO}_{q}$ implies uniform continuity for any $q\geq 1 $ and $0<\theta\leq 1$; see the appendix of \cite{HK20}.
\item[(iii)] H\"older's inequality yields that $\mathsf{DMO}_{q_2}$ implies $\mathsf{DMO}_{q_1}$ if $1\le q_1 < q_2$. However, the converse is not known. This is in contrast to the $\mathsf{BMO}$ spaces, where the equivalence of the conditions for all $1 \le q < \infty$ follows from the John--Nirenberg inequality. The underlying reason for this difficulty is that the function $r \mapsto \omega_f^{(q)}(r)$ is not necessarily increasing.
\item[(iv)] Suppose $f\in W^{1,1}(\Omega)$ and $Du\in L^{n,1}(\Omega)$. Then $f$ is $\mathsf{DMO}_{q}$ for any $1\le q <\infty$. We refer to \cite{HLO25} for the proof.
\item[(v)] The class of $\theta$-$\mathsf{DMO}_{q}$ functions is not contained in the class of Dini continuous functions. For example, let $f(x)=(-\log \abs{x})^{-\alpha}$. Then $f$ is Dini continuous if and only if $\alpha>1$. However, $f$ is $\theta$-$\mathsf{DMO}_{q}$ for every $q\ge 1$ if $1/\theta-1<\alpha$. 
\item[(vi)] $f$ is $\mathsf{VMO}$ if and only if $\lim_{r\rightarrow 0} \omega_f^{(q)}(r)=0$ for any $q\geq 1$. Hence if $f$ is $\theta$-$\mathsf{DMO}_{q}$, then $f$ is $\mathsf{VMO}$.
\end{itemize}
\end{remark}

We now state the main condition for the $C^1$-regularity result.  

\begin{condition} \label{datacondi}
We assume that the $\mathsf{data}$ (coefficients, source term, and obstacle) satisfies the following Dini mean oscillation type condition: there exists $\kappa>0$ such that the mean oscillation function 
$\omega_{\mathsf{data}}:\bR_+\rightarrow\bR$ defined by
\[
\omega_{\mathsf{data}}(r):= \sup_{x\in \Omega} \omega_{\mathsf{data}}(r, x),
\]
where 
\begin{equation*}
\omega_{\mathsf{data}}(r,x):=
\begin{cases}
 \omega^{(2+\kappa)}_A(r,x )+\omega_{|D\psi|^{p-2}D\psi}^{(p')}(r,x)^{\frac{p'}{2}}+ \omega_F^{(p')}(r,x)^{\frac{p'}{2}}  &\text{if } 2< p <\infty,\\[1ex]
\omega^{(2+\kappa)}_A(r,x )+ \omega^{(2+\kappa)}_{|D\psi|^{p-2}D\psi}(r,x )+\omega^{(2+\kappa)}_F (r,x ) &\text{if } 1< p \le 2,
\end{cases}
\end{equation*}
satisfies the Dini condition, i.e.,
\begin{equation*}           
\int_0^1 \frac{\omega_{\mathsf{data}}(t)}{t} \,dt <+\infty.
\end{equation*} 
This condition means that $A$ is $\mathsf{DMO}_{2+\kappa}$; $|D\psi|^{p-2}D\psi$ and $F$ are $p'/2$-$\mathsf{DMO}_{p'}$ for $p > 2$, and $\mathsf{DMO}_{2+\kappa}$ for $1 < p \le 2$. 
\end{condition}

\begin{remark} \label{rmk:Dinimeancond}
We now clarify the relationship between Condition~\ref{datacondi} and the conditions on the $\mathsf{data}$ in \cite{HLO25,KM14,Ok17}. As mentioned in the introduction, the $C^1$-regularity result for the $p$-Laplacian case in \cite{HLO25} requires the coefficient function to be $\mathsf{DMO}_{q}$ for some $q>2$.  This corresponds exactly to the condition that the coefficient matrix $A$ is $\mathsf{DMO}_{2+\kappa}$ for some $\kappa>0$.

We note from \eqref{eq0127tue3} that 
\[\begin{split}
&\fint_{B_r(x)}\left||D\psi(y)|^{p-2}D\psi(y) -  |\overline{D\psi}^{x,r}|^{p-2}\overline{D\psi}^{x,r}\right|^q\,dy \\
&\quad \sim \fint_{B_r(x)}\left(|D\psi(y)|^2+  |\overline{D\psi}^{x,r}|^2\right)^{\frac{q(p-2)}{2}}|D\psi(y)-\overline{D\psi}^{x,r}|^q\,dy\\
&\quad \lesssim \begin{cases} 
\fint_{B_r(x)}|D\psi-\overline{D\psi}^{x,r}|^{(p-1)q}\,dy 
&\text{if } 1<p\le 2,\\[1ex]
\|D\psi\|_{L^\infty(B_r(x))}^{(p-2)q}\fint_{B_r(x)}|D\psi-\overline{D\psi}^{x,r}|^q\,dy
&\text{if } p> 2.
\end{cases}
\end{split}\]
Therefore, we see that if $D\psi$ is $\theta_p$-Dini continuous with $\theta_p=\min\{p-1,p'/2\}$, then $|D\psi|^{p-2}D\psi$ is $p'/2$-$\mathsf{DMO}_{p'}$ for $p > 2$, and $\mathsf{DMO}_{2+\kappa}$ for $1 < p \le 2$. By a similar computation, if $F=|G|^{p-2}G$ and $G$ is $\theta_p$-Dini continuous, then $F$ is $p'/2$-$\mathsf{DMO}_{p'}$ for $p > 2$, and $\mathsf{DMO}_{2+\kappa}$ for $1 < p \le 2$. Consequently, Condition~\ref{datacondi} is weaker than the Dini condition in \cite{Ok17}.

Suppose $f\in L^{n,1}(B_r)$. Then one can find a unique solution $v\in W^{2,1}(B_r)\cap W^{1,1}_0(B_r)$ with $D^2v\in L^{n,1}(B_r)$ to the Poisson equation $\dv (Dv) = \Delta v=f$ in $B_r$. Set $F=Dv$. Then $F\in W^{1,1}(B_r)$ and $\dv F\in L^{n,1}(B_r)$. Hence, by Remark~\ref{rmk:Dini} (iv), $F$ is $\mathsf{DMO}_q$ for any $q\ge 1$. Therefore, Condition~\ref{datacondi} is weaker than the Lorentz condition for the source term in \cite{KM14} when $1< p\le 2$.
\end{remark}

Regarding the mean oscillation function $\omega_{\mathsf{data}}$ satisfying the Dini condition, we will use the following estimate, which can be found in \cite[Lemma 2.7]{DK17} and \cite[Lemma 3.6]{HLO25}.
\begin{lemma}\label{lem:Dini}
For $\delta\in(0,1)$, there exists $C_\omega=C_\omega>0$ depending on $n$, $p$, $\kappa$ and $\delta$ such that for every $r\in(0,1]$,
\begin{equation*}
\sum^\infty_{i=0} \omega_{\mathsf{data}}(\delta^i r) \leq C_\omega \int^r_0 \frac{\omega_{\mathsf{data}}(t)}{t}\,dt.
\end{equation*}
\end{lemma}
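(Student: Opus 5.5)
The plan is to show that $\omega_{\mathsf{data}}$ is \emph{almost monotone on dyadic-type scales}. Concretely, I would first prove that there is $C_1=C_1(n,p,\kappa,\delta)$ with
\begin{equation*}
\omega_{\mathsf{data}}(s)\le C_1\,\omega_{\mathsf{data}}(t)\qquad\text{whenever } \delta s\le t\le s\le 1 .
\end{equation*}
Granting this, the lemma follows by averaging over each scale interval $[\delta^{i+1}r,\delta^i r]$. For $t$ in this interval we have $\omega_{\mathsf{data}}(\delta^i r)\le C_1\omega_{\mathsf{data}}(t)$. Integrating against $dt/t$ over an interval of logarithmic length $\log(1/\delta)$ gives
\begin{equation*}
\omega_{\mathsf{data}}(\delta^i r)\le \frac{C_1}{\log(1/\delta)}\int_{\delta^{i+1}r}^{\delta^i r}\frac{\omega_{\mathsf{data}}(t)}{t}\,dt .
\end{equation*}
Summing over $i\ge 0$, the intervals tile $(0,r]$ exactly. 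This yields the claim, including the $i=0$ term, with $C_\omega=C_1/\log(1/\delta)$. Note that this step uses only comparability of $\omega_{\mathsf{data}}$ at a large scale with all smaller scales down to a fixed factor $\delta$. It does not need control of small scales by large ones, which would be the trivial direction.

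Since $\omega_{\mathsf{data}}(\cdot,x)$ is a finite sum of terms $\omega^{(q)}_f(\cdot,x)^{\theta}$ with $\theta\in\{1,p'/2\}$ and $q\in\{2+\kappa,p'\}$, the almost-monotonicity reduces to the single-function estimate
\begin{equation*}
\omega^{(q)}_f(s,x)\le C_2\,\omega^{(q)}_f(t)\qquad\text{for } \delta s\le t\le s .
\end{equation*}
Indeed, raising to the power $\theta$ and summing the finitely many terms preserves the comparison with $C_1=\max C_2^{\theta}$. Taking the supremum over $x$ then gives the claim for $\omega_{\mathsf{data}}(s)$.

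For the single-function estimate I would use a covering and chaining argument.
\begin{enumerate}
\item Cover $B_s(x)$ by $N=N(n,\delta)$ balls $B_{t/2}(y_k)$ with $y_k\in B_s(x)$.
\item Any two of these balls can be joined by a chain of at most $N$ balls $B_t(z_j)$ in which consecutive balls contain a common ball of radius comparable to $t$.
\item Since $\omega^{(q)}_f(s,x)\le 2\inf_c\big(\fint_{\Omega_s(x)}|f-c|^q\big)^{1/q}$, it suffices to take $c=\bar f^{y_1,t}$.
\item Bound $\fint_{\Omega_s(x)}|f-c|^q$ by $\sum_k$ of the contributions from $\Omega_t(y_k)$, using $|\Omega_t(y_k)|\le |B_t|\le \delta^{-n}|B_s|$. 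Each piece is controlled by $\omega^{(q)}_f(t)^q$ plus $|\bar f^{y_k,t}-\bar f^{y_1,t}|^q$.
\item Telescope along the chain: for overlapping sets $U\subset\Omega_t(z_j)\cap\Omega_t(z_{j+1})$, Hölder's inequality gives
\begin{equation*}
|\bar f^{z_j,t}-\bar f^{z_{j+1},t}|\le \Big(\tfrac{|B_t|}{|U|}\Big)^{1/q}\big(\omega^{(q)}_f(t,z_j)+\omega^{(q)}_f(t,z_{j+1})\big).
\end{equation*}
\end{enumerate}
Summing at most $N$ such steps gives the bound with $C_2=C_2(n,q,\delta)$.

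The main obstacle is the overlap step when the balls meet $\partial\Omega$. There, $\Omega_t(z_j)\cap\Omega_t(z_{j+1})$ must have measure $\gtrsim t^n$, and the chain must stay inside $\Omega_s(x)$ while remaining connected through $\Omega$. In the interior, where $B_s(x)\subset\Omega$, this is automatic. Since the lemma is used only for local (interior) estimates, my first attempt is to restrict $x$ to points with $B_r(x)\subset\Omega$; the paper can do this by working in a fixed $\Omega'\Subset\Omega$ and small $r$. Alternatively, I would note that the cited sources \cite[Lemma 2.7]{DK17} and \cite[Lemma 3.6]{HLO25} carry out exactly this interior chaining. I would present the argument in that form.
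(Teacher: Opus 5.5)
Your proof is correct and is the standard argument for this lemma, which the paper itself only cites: comparability of the sup-oscillation at scales $s$ and $t\in[\delta s,s]$ via covering and chaining, then averaging $\omega_{\mathsf{data}}(\delta^i r)$ against $dt/t$ over each shell $[\delta^{i+1}r,\delta^i r]$ (which is exactly what handles the $i=0$ term).

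Your restriction to centers $x$ with $B_s(x)\subset\Omega$ on the left-hand side, keeping the full supremum on the right, is actually necessary rather than just convenient. If $\Omega$ has two components at distance $2\varepsilon$ and $A$ takes different constant values on them, then $\omega_{\mathsf{data}}$ vanishes on $(0,2\varepsilon]$ but not just above, and a Gr\"onwall argument shows the literal inequality must fail for some $r$ slightly larger than $2\varepsilon$. The interior version you prove is all the paper ever uses.
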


\subsection{Inequalities for \texorpdfstring{$p$}{p}-Laplacian type nonlinearties}
We recall the symmetric coefficient matrix $A$ in \eqref{elliptic}, and set
\begin{align*}
\mathbf{a}(x,\xi) := \ip{A(x)\xi,\xi}^{\frac{p-2}{2}} A(x)\xi, \qquad
\bar{\mathbf{a}}^{x_0,r}(\xi) := \ip{\bar{A}^{x_0,r}\xi,\xi}^{\frac{p-2}{2}} \bar{A}^{x_0,r} \xi .
\end{align*}
for every $\xi$, $\eta\in \bR^n$ and $x_0\in\Omega$.
From the ellipticity condition \eqref{elliptic} of $A$, elementary computations (see, e.g., \cite{KZ99,KM14}) show that for every $x,y\in\Omega$ and $\xi,\eta\in\bR^N$,
\begin{align} \label{eq0130fri1}
\Lambda^{-\frac{p}{2}}\abs{\xi}^p \leq \mathbf{a}(x,\xi)\cdot\xi \leq \Lambda^{\frac{p}{2}} \abs{\xi}^p, \quad \text{and hence }\ |\mathbf{a}(x,\xi)| \leq \Lambda^{\frac{p}{2}} \abs{\xi}^{p-1},
\end{align}
\begin{align} \label{eq0130fri3} 
\min\{1,p-1\} \Lambda^{-\frac{p}{2}}\abs{\xi}^{p-2}\abs{\eta}^2 \leq \ip{D_\xi\mathbf{a}(x,\xi)\eta,\eta} \leq \max\{1,p-1\}\Lambda^{\frac{p}{2}}\abs{\xi}^{p-2}\abs{\eta}^2,
\end{align}
\begin{align} \label{eq0202thr1}
\abs{\mathbf{a}(x,\xi) - \mathbf{a}(y,\xi)} &\leq C\abs{ A(x) -A(y) } \,\abs{\xi}^{p-1},
\end{align}
where $C=C(n,p,\Lambda)>0$. Note that the first inequality in \eqref{eq0130fri3} called the ellipticity condition for $\mathbf{a}(x,\xi)$ yields the following monotonicity condition:
\begin{align}\label{eq0127tue2}
( \mathbf{a}(x,\xi)-\mathbf{a}(x,\eta))\cdot(\xi-\eta)&\geq C(\abs{\xi}+\abs{\eta})^{p-2} \abs{\xi-\eta}^{2}.
\end{align}
Moreover, from a well-known variational inequality (see \cite[Lemma 3]{DE08}) and the symmetry of $A$, we also have that
\begin{align}\label{eq0127tue3}
\abs{ \mathbf{a}(x,\xi)-\mathbf{a}(x,\eta)} \sim_C (\abs{\xi}^2+\abs{\eta}^2)^{\frac{p-2}{2}}\abs{\xi-\eta}\sim_C \big| \abs{\xi}^{p-2}\xi-\abs{\eta}^{p-2}\eta \big|
\end{align}
where $C=C(n,p,\Lambda)>0$. Here, the notation $f \sim_C g $ means that there exists a constant $C>0$ such that $C^{-1} g \leq f \leq C g$. We remark that the same inequalities hold with $\mathbf{a}(x,\xi)$ replaced by $\bar{\mathbf{a}}^{x_0,r}(\xi)$.
  
Also, we consider the vector field $\xi \to \abs{\xi}^{\frac{p-2}{2}}\xi$. Then, we have that for every $\xi$, $\eta\in \bR^n$,
 \begin{align}\label{eq0130fri4}
 \frac{1}{C} (\abs{\xi}+\abs{\eta})^{\frac{p-2}{2}} \leq \frac{\big|\abs{\xi}^{\frac{p-2}{2}}\xi-\abs{\eta}^{\frac{p-2}{2}}\eta\big|}{\abs{\xi-\eta}}\leq C(\abs{\xi}+\abs{\eta})^{\frac{p-2}{2}},
 \end{align}
 where $C=C(n,p)\ge 1$ (see, e.g., \cite{KM14}).

\subsection{Preliminary regularity results}
We begin by presenting the integrability results for the gradient $Du$ of solutions to the variational inequality \eqref{main_vari ineq}. 
We first recall the reverse H\"{o}lder type inequality in \cite[Theorem~3.1]{Ok17}.
\begin{lemma}
Let $u\in \mathcal{A}_\psi$ is a solution to  the variational inequality \eqref{main_vari ineq},
and suppose that  $\abs{D\psi}$, $\abs{F} $ in $L^\infty_{\loc}(\Omega)$.
Then, there exists $\sigma_0=\sigma_0(n,p,\Lambda)>0$ such that for any ball $B_{2r}\Subset \Omega$, $\sigma\in (0,\sigma_0]$, and $t\in(0,1]$, we have that
\begin{align}\label{eq0330mon1}
\bigg(  \fint_{B_r} \abs{Du}^{p(1+\sigma)}\,dx \bigg)^{\frac{1}{1+\sigma}} \leq C_t  \Biggset{ \bigg( \fint_{B_{2r}} \abs{Du}^{pt} \, dx\bigg)^\frac{1}{t}+    \|F\|_{L^\infty(B_{2r}}^{\frac{p}{p-1}}  +\|D\psi\|_{L^\infty(B_{2r})}^p}.
\end{align}
where $C=C(n,p,\Lambda,t)$.
\end{lemma}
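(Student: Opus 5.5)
The lemma is a Gehring-type higher integrability statement for the obstacle problem. I would prove it by a Caccioppoli inequality of Gehring type followed by Gehring's lemma (in the version with a right-hand-side term).

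\emph{Step 1: Caccioppoli inequality.} Fix a ball $B_{2\rho}(y)\subset B_{2r}$. Let $\eta\in C_c^\infty(B_{2\rho}(y))$ with $0\le\eta\le1$, $\eta\equiv1$ on $B_\rho(y)$ and $|D\eta|\le 2/\rho$. Since $\psi$ varies, I would subtract an affine comparison that respects the obstacle. I take
\[
\varphi=u-\eta^p\big(u-\psi-(\bar u-\bar\psi)\big),
\]
where $\bar u,\bar\psi$ are the means over $B_{2\rho}(y)$. Because $\bar u\ge\bar\psi$ and $\eta^p\le 1$, we have
\[
\varphi-\psi=(1-\eta^p)(u-\psi)+\eta^p(\bar u-\bar\psi)\ge0,
\]
so $\varphi$ is admissible. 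Testing \eqref{main_vari ineq} with $\varphi$ uses \eqref{eq0130fri1} and Young's inequality. This yields
\[
\fint_{B_\rho}|Du|^p\le \frac{C}{\rho^p}\fint_{B_{2\rho}}|u-\psi-(\bar u-\bar\psi)|^p+C\|D\psi\|_{L^\infty(B_{2r})}^p+C\|F\|_{L^\infty(B_{2r})}^{p'} .
\]
The $D\psi$ and $F$ terms are bounded pointwise by the $L^\infty$ norms; their products with $|Du|^{p-1}$ are absorbed via Young's inequality.

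\emph{Step 2: reverse H\"older inequality.} Apply Sobolev--Poincar\'e with exponent $p_*=\max\{1,np/(n+p)\}<p$ to $u-\psi$. Then bound $|D(u-\psi)|\le|Du|+\|D\psi\|_\infty$. This gives
\[
\fint_{B_\rho}|Du|^p\le C\Big(\fint_{B_{2\rho}}|Du|^{p_*}\Big)^{p/p_*}+C\big(\|D\psi\|_{L^\infty(B_{2r})}^p+\|F\|_{L^\infty(B_{2r})}^{p'}\big),
\]
valid for every ball $B_{2\rho}(y)\subset B_{2r}$.

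\emph{Step 3: Gehring and the choice of $t$.} Gehring's lemma, applied with $g=|Du|^{p_*}$, $q=p/p_*$ and a constant $f$, gives the estimate for some $\sigma_0(n,p,\Lambda)$ with $t=1$ on the right. The extension to arbitrary $t\in(0,1]$ is the standard step I expect to need the most care. I would first prove the estimate on concentric balls $B_s\subset B_{s'}$ with a constant like $(s'/(s'-s))^{\gamma}$. Then I interpolate
\[
\|Du\|_{L^p}\le\|Du\|_{L^{p(1+\sigma)}}^{\lambda}\|Du\|_{L^{pt}}^{1-\lambda},
\]
apply Young's inequality to absorb the higher norm with a small coefficient, and finish with the usual iteration lemma on $r\le s<s'\le 2r$. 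The condition $\sigma\le\sigma_0$ is inherited directly from the Gehring exponent.
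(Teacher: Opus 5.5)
Your argument is correct and follows the standard route. The paper does not reprove this lemma; it cites \cite[Theorem~3.1]{Ok17}, where the estimate is obtained the same way: the admissible test function $\varphi=u-\eta^p\big(u-\psi-(\bar u-\bar\psi)\big)$ gives a Caccioppoli inequality, then Sobolev--Poincar\'e, Gehring's lemma with a constant right-hand side, and a covering, interpolation and iteration step that lowers the exponent to $pt$. One detail should be added: for $t<1$ the Young constant in the interpolation step blows up as $\sigma\to0$, so prove the estimate for $\sigma=\sigma_0$ and deduce it for all $\sigma\in(0,\sigma_0]$ by H\"older's inequality, which keeps $C_t$ independent of $\sigma$.
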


We next introduce Calder\'{o}n--Zygmund type estimates for the variational inequality \eqref{main_vari ineq}.
As mentioned above, Condition~\ref{datacondi} implies that $A$ is a \textsf{VMO} function. Therefore, in view of  \cite[Theorem~1.5]{BDM11} and \cite[Theorem~2.5]{BCW12}, combined with the result in the previous lemma, we can deduce the following result:

\begin{lemma}\label{lem:CZ_variational inequality}
Let $u\in \mathcal{A}_\psi$ is a solution to  the variational inequality \eqref{main_vari ineq} under the assumptions  \eqref{eq0130fri1}, and \eqref{eq0130fri3} with $p>1$. Suppose that the mean oscillation of the  $\mathsf{data}$ satisfies the Dini condition. 
Then for any $q>p$ and $t\in(0,1]$, there exist $\tilde R_q, \tilde C_q > 0$ depending on $n$, $p$, $\Lambda$, $q$, and $\tilde C_{q,t} > 0$ depending additionally on $t$, such that for any ball $B_{2r}\Subset \Omega$ with $r\le \tilde R_q$, we have
\begin{align}\label{eq0128wed2} 
\fint_{B_r}\abs{Du}^q\, dx &\leq  \tilde C_{q} \Biggset{     \bigg( \fint_{B_{3r/2}} \abs{Du}^{p} \, dx\bigg)^\frac{q}{p}+ \fint_{B_{3r/2}}  \big( \abs{F}^{\frac{q}{p-1}}  +\abs{D\psi}^q  \big)\, dx} \nonumber\\
&\leq  \tilde C_{q,t} \Biggset{     \bigg( \fint_{B_{2r}} \abs{Du}^{pt} \, dx\bigg)^\frac{q}{pt}+ \|F\|_{L^\infty(B_{2r}}^{\frac{q}{p-1}}  +\|D\psi\|_{L^\infty(B_{2r})}^q }.
\end{align}  
\end{lemma}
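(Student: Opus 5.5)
This lemma is obtained by combining known global Calderón--Zygmund theory for obstacle problems with the reverse Hölder inequality \eqref{eq0330mon1}. The first line of \eqref{eq0128wed2} follows from the interior estimates of \cite[Theorem~1.5]{BDM11} and \cite[Theorem~2.5]{BCW12}. The second line then follows from the first by controlling the $L^p$ average of $Du$ on $B_{3r/2}$ by its $L^{pt}$ average on $B_{2r}$, and by bounding the data by $L^\infty$ norms.

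\emph{First inequality.} By Remark~\ref{rmk:Dini}(vi), Condition~\ref{datacondi} implies $A\in\mathsf{VMO}$. Hence for any $\varepsilon>0$ there is $\tilde R_q$, determined by the modulus $r\mapsto\omega^{(1)}_A(r)$ and by $\varepsilon$, such that $\omega^{(1)}_A(\rho)\le\varepsilon$ for all $\rho\le\tilde R_q$. This is exactly the $(\delta,R)$-vanishing smallness of the coefficients required in \cite{BDM11,BCW12}, with $\delta=\varepsilon(n,p,\Lambda,q)$. Those papers treat variational inequalities of exactly the form \eqref{main_vari ineq}, with structure \eqref{eq0130fri1}--\eqref{eq0130fri3}, and give the local estimate
\[
\fint_{B_r}|Du|^q\,dx\le C\Big(\fint_{B_{3r/2}}|Du|^p\,dx\Big)^{q/p}+C\fint_{B_{3r/2}}\big(|F|^{q/(p-1)}+|D\psi|^q\big)\,dx .
\]
The right-hand side is finite because Condition~\ref{datacondi} makes $F$ and $|D\psi|^{p-2}D\psi$ uniformly continuous (Remark~\ref{rmk:Dini}(ii)), hence locally bounded. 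The resulting constant depends only on $n,p,\Lambda,q$.

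\emph{Second inequality.} First bound the data integrals trivially by $\|F\|_{L^\infty(B_{2r})}^{q/(p-1)}$ and $\|D\psi\|_{L^\infty(B_{2r})}^q$. Next, cover $B_{3r/2}$ by finitely many balls $B_{r/8}(y)$ with $y\in B_{3r/2}$, so that $B_{r/4}(y)\subset B_{2r}$; the number of balls depends only on $n$. On each ball apply \eqref{eq0330mon1} with radius $r/8$, dropping the higher exponent $1+\sigma$ by Hölder, to get
\[
\fint_{B_{r/8}(y)}|Du|^p\le C_t\Big\{\Big(\fint_{B_{r/4}(y)}|Du|^{pt}\Big)^{1/t}+\|F\|_{L^\infty(B_{2r})}^{p'}+\|D\psi\|_{L^\infty(B_{2r})}^p\Big\}.
\]
Enlarging $B_{r/4}(y)$ to $B_{2r}$ costs only a factor depending on $n$ and $t$. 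Summing over the cover, raising to the power $q/p$, and using $(a+b+c)^{q/p}\lesssim a^{q/p}+b^{q/p}+c^{q/p}$ gives the second line of \eqref{eq0128wed2}, since $p'q/p=q/(p-1)$.

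\emph{Main obstacle.} The delicate step is the first one: checking that the hypotheses of \cite{BDM11,BCW12} (small BMO coefficients and obstacle regularity stated through $D\psi$) are met. It must also be checked that the localized estimate there produces the pure power of the $L^p$ average rather than a global norm. If those works only state global estimates, the plan is to localize with a cutoff function. In that case the choice of $\tilde R_q$ is forced by the VMO smallness and nothing else.
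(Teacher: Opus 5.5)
Your proposal is correct and follows the paper's route. The paper likewise gets the first line from the interior Calder\'on--Zygmund estimates of \cite[Theorem~1.5]{BDM11} and \cite[Theorem~2.5]{BCW12}, using only that $A$ is $\mathsf{VMO}$, and the second line by combining with the reverse H\"older inequality \eqref{eq0330mon1}; your covering of $B_{3r/2}$ by balls $B_{r/8}(y)$ supplies the details the paper leaves implicit, and you are right that $\tilde R_q$ must in fact also depend on the $\mathsf{VMO}$ modulus of $A$.
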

 
Let us consider the following $p$-Laplacian type equation involving a constant symmetric matrix $A_0$ satisfying \eqref{elliptic} with $A(x)$ replaced by $A_0$:
\begin{equation} \label{eq0427mon1}
-\dv ( \ip{A_0 D \tilde{u}, D\tilde{u} }^{\frac{p-2}{2}}A_0 D\tilde{u})=-\dv F \quad\hbox{in $B_r$}.
\end{equation}
The first regularity result is the global Calder\'{o}n--Zygmund estimate with nonzero boundary data (see \cite[Theorem~5.1]{KZ01}).
\begin{lemma}
Let $g\in W^{1,p}(B_r)$ and $\tilde u\in W^{1,p}(B_r)$ be the weak solution to \eqref{eq0427mon1} with $u-g\in W^{1,p}_0(B_r)$.  Suppose that $F\in L^q(B_r)$ and $g\in W^{1,q}(B_r)$ for some $q>p$.
There exists a constant $C=C(n,p,\Lambda,q)>0$ so that
\begin{align}\label{eq0410fri1}
\int_{B_r} \abs{D\tilde{u}}^q\, dx &\leq  C\int_{B_r}  \big( \abs{F}^{\frac{q}{p-1}}  +\abs{Dg}^q  \big)\, dx   .
\end{align}
 
\end{lemma}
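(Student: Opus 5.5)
By scaling, $\tilde u_r(x):=\tilde u(rx)/r$, $g_r(x):=g(rx)/r$, $F_r(x):=F(rx)$, both sides of \eqref{eq0410fri1} are multiplied by the same factor $r^{-n}$. So I will assume $r=1$. I would also first replace $A_0$ by the identity. Since $A_0$ is constant, symmetric and satisfies \eqref{elliptic}, the linear change of variables $y=A_0^{-1/2}x$ turns \eqref{eq0427mon1} into $-\dv(|Dv|^{p-2}Dv)=-\dv \tilde F$ on the ellipsoid $E=A_0^{-1/2}B_1$. Here $|\tilde F|\sim|F|$ and $|Dv|\sim |D\tilde u|$, with constants depending only on $n,p,\Lambda$. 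The domain $E$ is smooth and uniformly convex, with geometry controlled by $\Lambda$.

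\textbf{Step 1: energy estimate ($q=p$).} I test the weak formulation with $\tilde u-g\in W^{1,p}_0$. Using \eqref{eq0130fri1} and Young's inequality this gives
\[
\int_{B_1}|D\tilde u|^p\,dx\le C\int_{B_1}\big(|F|^{p'}+|Dg|^p\big)\,dx .
\]
The same test function localized with a cutoff gives Caccioppoli inequalities on $B_\rho(x_0)\cap B_1$ for every $x_0\in\overline{B_1}$. For boundary points I use $\eta^p(\tilde u-g)$, which vanishes on $\partial B_1$.

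\textbf{Step 2: comparison and reference estimates.} For a small $\epsilon>0$ and each ball $B_\rho(x_0)$, I compare $\tilde u$ with the solution $h$ of the homogeneous problem $-\dv(\bar{\mathbf a}(Dh))=0$. In the interior case $h=\tilde u$ on $\partial B_\rho(x_0)$. In the boundary case $h=\tilde u-g$ on $\partial(B_\rho(x_0)\cap B_1)$, so that $h=0$ on the part of $\partial B_1$ inside $B_\rho(x_0)$. Testing with $\tilde u-h$ (respectively $\tilde u-g-h$) and using \eqref{eq0127tue2}, \eqref{eq0127tue3} gives
\[
\fint |D\tilde u-Dh|^p\le \epsilon\fint|D\tilde u|^p+C_\epsilon\fint\big(|F|^{p'}+|Dg|^p\big).
\]
For $p<2$ this requires the usual splitting with Hölder's inequality. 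The reference estimates for $h$ are as follows. In the interior, $\bar{\mathbf a}$ has constant coefficients, so the $C^{1,\alpha}$ theory gives $\sup_{B_{\rho/2}}|Dh|^p\le C\fint_{B_\rho}|Dh|^p$. At the boundary, I use the Lipschitz estimate up to a smooth boundary for $p$-harmonic functions with zero Dirichlet data, obtained after flattening $\partial B_1$ (Lieberman's boundary regularity).

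\textbf{Step 3: good-$\lambda$ / covering argument.} With the $L^\infty$ bound for $Dh$ and the comparison estimate, a Vitali (or Calderón--Zygmund cube) covering of the level sets of $\mathcal M(|D\tilde u|^p)$ gives the good-$\lambda$ inequality. Here $\mathcal M$ is the maximal function restricted to $B_1$. It takes the form
\[
|\{\mathcal M(|D\tilde u|^p)>N\lambda\}|\le C\epsilon\,|\{\mathcal M(|D\tilde u|^p)>\lambda\}|+|\{\mathcal M(|F|^{p'}+|Dg|^p)>\delta\lambda\}|,
\]
for suitable $N$ and small $\epsilon,\delta$. Summing over dyadic levels and choosing $\epsilon$ small relative to $N^{-q/p}$ gives
\[
\|D\tilde u\|_{L^q}^q\le C\big(\|F\|_{L^{q/(p-1)}}^{q/(p-1)}+\|Dg\|_{L^q}^q\big)+C|B_1|^{1-q/p}\|D\tilde u\|_{L^p}^q .
\]
The last term is absorbed via Step 1 and Hölder's inequality, using $q/p>1$.

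\textbf{Main obstacle.} I expect the boundary part of Step 2 to be the delicate point. The comparison must carry the nonzero trace $g$ while the reference function has zero data, which is why I subtract $g$ only in the boundary datum and not in the equation. One also needs the uniform boundary Lipschitz bound for the homogeneous $p$-Laplacian near the curved boundary, uniformly in the local scale. Since $\partial E$ is smooth with curvature controlled by $\Lambda$, the standard boundary $C^{1,\alpha}$ regularity should supply this.
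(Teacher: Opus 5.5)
The paper gives no argument of its own for this lemma and simply quotes Kinnunen--Zhou \cite[Theorem~5.1]{KZ01}. Your outline is correct and is the standard Caffarelli--Peral/Kinnunen--Zhou-type scheme behind that result: energy bound, comparison with the homogeneous constant-coefficient problem carrying the zero trace $\tilde u-g$ near the boundary, interior and Lieberman boundary Lipschitz bounds, a maximal-function covering, and absorption of the lower-order $L^p$ term via the energy estimate and H\"older's inequality. Two points should be made explicit: the good-$\lambda$ inequality is only used for $\lambda$ above a threshold $\lambda_0\sim\epsilon^{-1}\fint_{B_1}|D\tilde u|^p$, and the level-set sums must be truncated before absorbing, since $D\tilde u\in L^q$ is not known a priori.
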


We next present local boundedness and excess decay estimates for $D\tilde{u}$ associated with the $C^{1,\alpha}$-regularity of \eqref{eq0427mon1} in the $L^1$ sense. While the $C^{1,\alpha}$-regularity for the $p$-Laplacian type equation is classical (see, e.g., \cite{Evans82, DiBenedetto83}), the following result provides a recent improvement due to Antonini \cite[Theorem~4.1]{Antonini26}.

\begin{lemma}\label{lem:C1alpha}
 Let $\tilde{u}\in W^{1,p}(B_r)$ be a weak solution to \eqref{eq0427mon1}.  There exist $\alpha\in (0,1)$ and $C_0>0$ depending on $n$, $p$, and $\Lambda$ such that, for any $B_{\nu}(y)\subset B_\rho(y)\subset B_r$, 
\begin{align} \label{eq0331tue2}
\begin{split}
\norm{D\tilde{u}}_{L^\infty(B_{\rho/2}(y))} &\leq C_0\fint_{B_\rho(y)}\abs{D\tilde{u}}\,dx,\\
\fint_{B_\nu(y)} \abs{ D\tilde{u}- \overline{D\tilde{u}}^{y,\nu}}\,dx\leq &\osc_{B_\nu(y)} Dv  \leq C_0\bigg(\frac{\nu}{\rho}\bigg)^\alpha \fint_{B_\rho(y)} \abs{ D\tilde{u}- \overline{D\tilde{u}}^{y,\rho}}\,dx.
\end{split}
\end{align}
\end{lemma}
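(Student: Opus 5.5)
The plan is to reduce the lemma to the classical $C^{1,\alpha}$ theory for the $p$-Laplacian, and then to upgrade the classical $L^p$-type estimates to the sharp $L^1$ form by a two-regime argument.

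I read \eqref{eq0427mon1} in the setting where it is used, namely with $F$ constant on $B_r$. This is an assumption on my part, since the lemma as stated does not say it. For nonconstant $F$ a bound of the form \eqref{eq0331tue2} with no $F$-term on the right cannot hold. With $F$ constant, $\int F\cdot D\phi=0$ for all $\phi\in W^{1,p}_0$, so $\tilde u$ is $A_0$-$p$-harmonic.

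Since $A_0$ is constant, symmetric and elliptic, set $w(z):=\tilde u(A_0^{1/2}z)$. Then $Dw(z)=A_0^{1/2}D\tilde u(A_0^{1/2}z)$ and $\ip{A_0D\tilde u,D\tilde u}=|Dw|^2$, so a direct change of variables shows that $w$ is $p$-harmonic on the ellipsoid $A_0^{-1/2}B_r$. Balls and ellipsoids are comparable with constants depending only on $\Lambda$, and $D\tilde u$ and $Dw$ differ by a bounded invertible matrix. Hence it suffices to prove both estimates for $p$-harmonic functions, with all constants depending on $n$, $p$ and $\Lambda$.

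\emph{Sup bound.} The classical results \cite{Evans82,DiBenedetto83} give $\|Dw\|_{L^\infty(B_{\rho/2})}\le C(\fint_{B_\rho}|Dw|^p)^{1/p}$, and in fact the same bound with $\rho/2$ replaced by $s$ and the factor $(\rho-s)^{-n/p}$ on any intermediate pair of balls. To lower the exponent to $1$, I would use the interpolation $|Dw|^p\le\|Dw\|_\infty^{p-1}|Dw|$ together with Young's inequality. This gives, for $\rho/2\le s<t\le\rho$,
\[
\|Dw\|_{L^\infty(B_s)}\le\tfrac12\|Dw\|_{L^\infty(B_t)}+C(t-s)^{-n}\int_{B_\rho}|Dw|\,dx .
\]
The standard iteration lemma for such inequalities then yields $\|Dw\|_{L^\infty(B_{\rho/2})}\le C_0\fint_{B_\rho}|Dw|$.

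\emph{Excess decay.} The first inequality, $\fint_{B_\nu}|D\tilde u-\overline{D\tilde u}^{y,\nu}|\le\osc_{B_\nu}D\tilde u$, is trivial; here I read the $Dv$ in the statement as a typo for $D\tilde u$. For the second, let $E(\rho):=\fint_{B_\rho}|Dw-\overline{Dw}^{\rho}|$ and $\xi:=\overline{Dw}^{\rho}$. It suffices to treat $\nu\le\rho/4$, since otherwise the claim follows from the sup bound. I would split into two regimes.

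\begin{enumerate}[(a)]
\item \emph{Degenerate regime}, $|\xi|\le M E(\rho)$. The sup bound gives
\[
\osc_{B_{\rho/2}}Dw\le2\|Dw\|_{L^\infty(B_{\rho/2})}\le C\bigl(|\xi|+E(\rho)\bigr)\le C E(\rho).
\]
The classical Hölder estimate, applied on $B_{\rho/2}$ and also in its oscillation form $\osc_{B_\nu}Dw\le C(\nu/\rho)^{\alpha}\osc_{B_{\rho/2}}Dw$ (DiBenedetto's intrinsic iteration), then gives the claim.
\item \emph{Nondegenerate regime}, $|\xi|> M E(\rho)$. Since $|Dw|\le C(|\xi|+E)$ on $B_{\rho/2}$, Chebyshev's inequality gives $|Dw-\xi|\le|\xi|/2$ on most of $B_{\rho/2}$. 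Using the De Giorgi-type lemma from DiBenedetto's theory, I would then upgrade this to $|Dw|\ge|\xi|/4$ on $B_{\rho/4}$ once $M$ is large. On $B_{\rho/4}$ the equation for $D_kw$ is therefore a uniformly elliptic linear equation, with coefficients $D_\xi\mathbf a(Dw)$ that are Hölder continuous and have ellipticity ratio controlled by \eqref{eq0130fri3}. Linear Campanato $L^1$ decay for such equations gives $E(\nu)\le C(\nu/\rho)^\alpha E(\rho)$, and also the oscillation bound.
\end{enumerate}

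The main obstacle is regime (b): making the smallness-to-nondegeneracy step quantitative with only an $L^1$ excess rather than an $L^p$ or $L^2$ excess. If the De Giorgi step is delicate, my first fallback is to compare $E$ with the $L^2$ excess on a smaller ball. This is possible because $Dw$ is bounded by $C(|\xi|+E)$, and that boundedness makes the two excesses comparable up to constants. Failing that, I would simply invoke \cite[Theorem~4.1]{Antonini26} after the change of variables above.
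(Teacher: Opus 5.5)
The paper gives no proof of this lemma; it quotes it from \cite[Theorem~4.1]{Antonini26}, with the classical $C^{1,\alpha}$ theory of Evans and DiBenedetto in the background. Your last fallback is therefore exactly the paper's route. Your main line is a genuinely different reconstruction from classical ingredients, and it is sound in its essentials.

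\begin{itemize}
\item The $L^1$ sup bound via the $L^p$ estimate, Young's inequality and the iteration lemma is standard.
\item In the degenerate regime you do not need the oscillation form. The classical bound $\osc_{B_\nu}Dw\le C(\nu/\rho)^\alpha\sup_{B_{\rho/2}}\abs{Dw}$, together with $\sup_{B_{\rho/2}}\abs{Dw}\le C(\abs{\xi}+E(\rho))\le CE(\rho)$, already suffices.
\item In the nondegenerate regime, apply DiBenedetto's first-alternative lemma to $\partial_e w$ with $e=\xi/\abs{\xi}$. Then use De Giorgi--Nash--Moser for the uniformly elliptic linearized equation solved by $D_kw-\xi_k$. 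This gives $\sup_{B_{\rho/8}}\abs{Dw-\xi}\le CE(\rho)$ and the H\"older decay.
\end{itemize}

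Your readings ($F$ constant, and $Dv$ a typo for $D\tilde u$) match how the lemma is used: only for $v$ and $v_j$, which solve the homogeneous problem. The citation buys brevity and the full range $1<p<\infty$, without checking the first-alternative lemma in the singular case $p<2$; there $\abs{Dw}^{p-2}$ is unbounded on the degenerate set and the truncation needs care. Your route buys transparency about which classical facts are really needed. The change of variables is also unnecessary, since the classical theory applies directly to $\bar{\mathbf a}$ under \eqref{eq0130fri3}.

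Two auxiliary claims are wrong, however.

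\emph{The reduction to $\nu\le\rho/4$.} ``Otherwise the claim follows from the sup bound'' fails. The sup bound controls $\osc_{B_\nu}Dw$ by $\fint_{B_\rho}\abs{Dw}\approx\abs{\xi}+E(\rho)$, not by $E(\rho)$; think of $w$ close to a nonzero linear function. Worse, for $\nu$ close to $\rho$ the middle inequality in \eqref{eq0331tue2} is false with a uniform constant. For $p=2$, $n=2$, the function $\mathrm{Re}\,(\zeta-z)^{3/4}$ with $\zeta=x_1+ix_2$, $\abs{z}=\rho=r$ and the branch cut pointing outward is harmonic and $W^{1,2}$ in $B_r$. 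It has finite excess but unbounded gradient. So you must restrict to $\nu\le\rho/2$, which is all the paper uses ($\nu/\rho\in\{\delta,2\delta\}$ with $\delta<1/4$). Obtain the range $\rho/8<\nu\le\rho/2$ by applying the small-scale estimate at centres $z\in B_{\rho/2}(y)$ with $B_{\rho/2}(z)\subset B_\rho(y)$ and chaining finitely many balls.

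\emph{Your first fallback.} The bound $\abs{Dw}\le C(\abs{\xi}+E)$ only gives $\fint_{B_{\rho/2}}\abs{Dw-\xi}^2\le C(\abs{\xi}+E)E$. When $\abs{\xi}\gg E$ this is far larger than $E^2$. Comparability of the $L^1$ and $L^2$ excesses there would need $\sup\abs{Dw-\xi}\lesssim E$, which is the conclusion itself. You do not need this fallback, because the ``main obstacle'' you flag is not one. The first-alternative lemma requires only the measure condition $\abs{\{\partial_e w<\abs{\xi}/2\}\cap B_{\rho/2}}\le\nu_0\abs{B_{\rho/2}}$ together with $\sup_{B_{\rho/2}}\abs{Dw}\le C\abs{\xi}$. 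Chebyshev gives the measure condition directly from the $L^1$ excess once $M\ge 2^{n+1}/\nu_0$.
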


\section{Comparison Estimates}  \label{sec3}

Let $u\in \mathcal{A}_\psi$ be a solution to the variational inequality \eqref{main_vari ineq} and $B_{2r}(x_0)\Subset \Omega$, and the mean oscillation of the $\mathsf{data}$ satisfy the Dini condition with constant $\kappa>0$.
We take 
\begin{equation}\label{eq:R0}
R_0:=\min\{1,\tilde R_q\} \quad \text{with }\ q=\max\left\{p\frac{2+\kappa}{\kappa},\frac{(2-p)(2+\kappa)}{\kappa}\right\},
\end{equation}
where $\tilde R_q $ is given in Lemma~\ref{lem:CZ_variational inequality}.
We then consider the following two comparison problems:
\begin{equation} \label{eq0201sun1}
\begin{cases}
-\dv ( \ip{\bar{A}^{x_0,r}Dw, Dw }^{\frac{p-2}{2}} \bar{A}^{x_0,r} Dw)
=- \dv ( \ip{\bar{A}^{x_0,r} D\psi, D\psi }^{\frac{p-2}{2}} \bar{A}^{x_0,r} D\psi) 
& \text{in } B_r(x_0), \\
 w= u & \text{on } \partial B_r(x_0),
\end{cases}
\end{equation}
\begin{equation}\label{eq0201sun2}
\begin{cases}
-\dv ( \ip{\bar{A}^{x_0,r}Dv, Dv }^{\frac{p-2}{2}} \bar{A}^{x_0,r} Dv)
 =0
& \text{in } B_r(x_0), \\
 v= w & \text{on } \partial B_r(x_0),
\end{cases}
\end{equation}
where $B_{2r}(x_0)\Subset\Omega$ and $r\le R_0$.
Let $w$, $v\in W^{1,p}(B_r(x_0))$ are the weak solutions to the problems. We first derive energy estimates.
\begin{lemma}  
Under the above setting, we have
\begin{align} \label{eq0410fri3}
\begin{split}
\fint_{B_r(x_0)}\abs{Dw}^p \,dx &\leq C\bigg( \fint_{B_r(x_0)}  \abs{D\psi}^p+ \abs{Du}^p\, dx \bigg)\\
\fint_{B_r(x_0)}\abs{Dv}^p \,dx &\leq C\fint_{B_r(x_0)}\abs{Dw}^p \,dx \leq C\bigg( \fint_{B_r(x_0)}  \abs{D\psi}^p+ \abs{Du}^p\, dx \bigg)
\end{split}
\end{align}
where $C=C(n,p,\Lambda)>0$. Moreover, 
\begin{equation}\label{eq0410fri2}
 \fint_{B_r(x_0)}\abs{Dv}\,dx \leq C_1 \fint_{B_{2r}(x_0)} \abs{Du} \,dx+  \norm{F}_{L^\infty(B_{2r}(x_0))}^{\frac{1}{p-1}}+\norm{D\psi}_{L^\infty(B_{2r}(x_0))}    
\end{equation}
 where $C_1=C_1(n,p,\Lambda,\kappa)\geq 1$.

\end{lemma}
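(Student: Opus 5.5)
The plan is to handle the two $L^p$ bounds in \eqref{eq0410fri3} by testing the weak formulations with the natural differences, and to derive \eqref{eq0410fri2} from these via Lemma~\ref{lem:CZ_variational inequality}. We write $\bar{\mathbf a}=\bar{\mathbf a}^{x_0,r}$ and $B_r=B_r(x_0)$.

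For the first inequality, we test \eqref{eq0201sun1} with $w-u\in W^{1,p}_0(B_r)$. This gives
\[
\int_{B_r}\bar{\mathbf a}(Dw)\cdot Dw\,dx=\int_{B_r}\bar{\mathbf a}(Dw)\cdot Du\,dx+\int_{B_r}\bar{\mathbf a}(D\psi)\cdot(Dw-Du)\,dx .
\]
By \eqref{eq0130fri1} (valid for $\bar{\mathbf a}$), the left side is at least $\Lambda^{-p/2}\int|Dw|^p$. Each term on the right is at most $\Lambda^{p/2}\int|Dw|^{p-1}(|Du|+|D\psi|)+\Lambda^{p/2}\int|D\psi|^{p-1}|Du|$. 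Young's inequality with a small parameter lets us absorb the $|Dw|^p$ contributions, which gives $\fint|Dw|^p\le C\fint(|D\psi|^p+|Du|^p)$.

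For the second inequality, $v$ minimizes $\int\bar{\mathbf a}(\xi)\cdot\xi$-type energy, namely $\xi\mapsto p^{-1}\langle\bar A\xi,\xi\rangle^{p/2}$, among functions with boundary data $w$. Comparing with $w$ and using \eqref{eq0130fri1} gives $\fint|Dv|^p\le\Lambda^{p}\fint|Dw|^p$. Alternatively, we can test with $v-w$ and use Young's inequality as above. Combining this with the first bound yields the second line of \eqref{eq0410fri3}.

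For \eqref{eq0410fri2}, we start with Hölder's inequality and the bounds just obtained:
\[
\fint_{B_r}|Dv|\le\Big(\fint_{B_r}|Dv|^p\Big)^{1/p}\le C\Big(\fint_{B_r}|Du|^p\Big)^{1/p}+C\|D\psi\|_{L^\infty(B_r)} .
\]
The main step is to lower the exponent on $Du$ from $p$ to $1$, enlarging the ball to $B_{2r}$. We apply the second line of \eqref{eq0128wed2} in Lemma~\ref{lem:CZ_variational inequality} with exponent $q$ slightly above $p$ and with $t=1/p$. This is admissible since $r\le R_0\le\tilde R_q$ for the $q$ chosen in \eqref{eq:R0}; for a different $q$ one replaces $R_0$ accordingly, or simply uses $q$ from \eqref{eq:R0}, since $q>p$ there. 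It gives
\[
\Big(\fint_{B_r}|Du|^p\Big)^{1/p}\le\Big(\fint_{B_r}|Du|^q\Big)^{1/q}\le C\Big(\fint_{B_{2r}}|Du|\,dx+\|F\|_{L^\infty(B_{2r})}^{\frac1{p-1}}+\|D\psi\|_{L^\infty(B_{2r})}\Big).
\]
Alternatively, one could use the reverse Hölder inequality \eqref{eq0330mon1} with $t=1/p$ directly. Either route needs $F,D\psi\in L^\infty_{\loc}$, which follows from Remark~\ref{rmk:Dini}(ii): DMO implies uniform continuity, so $F$ and $|D\psi|^{p-2}D\psi$, hence $D\psi$, are locally bounded. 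Collecting the terms gives \eqref{eq0410fri2}. The resulting constant depends on $\kappa$ through $q$, and we take $C_1\ge1$.

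The only delicate points are bookkeeping. First, the Young absorption must be done correctly in the first estimate. Second, the constants multiplying the $L^\infty$ terms come out as $C$, not $1$ as displayed in \eqref{eq0410fri2}. We will state the bound with $C_1$ multiplying all three terms, or else absorb the constants, and note that this is harmless for the later use.
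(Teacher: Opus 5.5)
Your proposal is correct and matches the paper's proof. You test \eqref{eq0201sun1} with $w-u$ and \eqref{eq0201sun2} with $v-w$ (your energy-minimality comparison for $v$ is an equivalent shortcut) and absorb via Young's inequality; you then get \eqref{eq0410fri2} from H\"older's inequality, \eqref{eq0410fri3}, and a higher-integrability bound with $t=1/p$. The paper uses the reverse H\"older inequality \eqref{eq0330mon1}, which is your stated alternative, rather than \eqref{eq0128wed2}, but both are legitimate here since $r\le R_0\le\tilde R_q$; and your reading of \eqref{eq0410fri2} as $C_1(\cdots)$ on all three terms is exactly how it is used later in Lemma~\ref{lemma0331tue1}.
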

\begin{proof}
Taking $w-u\in W^{1,p}_0(B_r(x_0))$ as a test function in \eqref{eq0201sun1}, we obtain
\begin{align} \label{eq0201sun3}
\begin{split}
&\fint_{B_r(x_0)} \ip{ \bar{A}^{x_0,r}  Dw,Dw}^{\frac{p-2}{2}} \bar{A}^{x_0,r} Dw\cdot D(w-u) \,dx \\
&\qquad\qquad = \fint_{B_r(x_0)}  \ip{ \bar{A}^{x_0,r} D\psi,D\psi}^{\frac{p-2}{2}} \bar{A}^{x_0,r} D\psi\cdot D(w-u)  \,dx
\end{split}
\end{align}
By \eqref{eq0130fri1}, \eqref{eq0201sun3} and Yonung's inequality, we have that
\begin{align*}
\fint_{B_r(x_0)} \abs{Dw}^p \, dx &\leq C\fint_{B_r(x_0)} \ip{\bar{A}^{x_0,r} Dw,Dw}^{\frac{p-2}{2}} \bar{A}^{x_0,r}Dw\cdot Dw \,dx\\
&\leq C\fint_{B_r(x_0)} \ip{\bar{A}^{x_0,r} Dw,Dw}^{\frac{p-2}{2}}\bar{A}^{x_0,r}Dw\cdot Du\, dx \\
&\qquad +C\fint_{B_r(x_0)}  \ip{\bar{A}^{x_0,r}  D\psi,D\psi}^{\frac{p-2}{2}} \bar{A}^{x_0,r}D\psi\cdot D(w-u) \,dx \\
&\leq \frac{1}{2}  \fint_{B_r(x_0)}  \abs{Dw}^p\, dx+C  \fint_{B_r(x_0)}  \abs{D\psi}^p+ \abs{Du}^p \,dx.
\end{align*}
This proves the first of \eqref{eq0410fri3}. Similarly, by taking $v-w\in W^{1,p}_0(B_r(x_0))$ as a test in \eqref{eq0201sun2},
we have that
\begin{align*}
\fint_{B_r(x_0)} \ip{ \bar{A}^{x_0,r}  Dv,Dv}^{\frac{p-2}{2}} \bar{A}^{x_0,r} Dv\cdot D(v-w) \,dx=0.
\end{align*}
By \eqref{eq0130fri1} and Young's inequality, we also prove the second of \eqref{eq0410fri3}.

Finally, by H\"{o}lder's inequality, \eqref{eq0410fri3}, \eqref{eq0330mon1} with $t=1/p$ and qusai-triangle inequality, we also proves the \eqref{eq0410fri2}.
\end{proof}

Now, we shall derive comparison estimates between $Du$ and $Dv$.
\begin{lemma}  \label{lemma0205thr1}
Under the setting in the beginning of the section, we have 
\begin{align*}
&\fint_{B_r(x_0)} \big(  \abs{Du}+\abs{Dv} \big)^{p-2}\abs{Du-Dv}^2 \,dx\\
&\leq C\omega_{\mathsf{data}}(r,x_0)^2\biggset{    \bigg(\fint_{B_{2r}(x_0)} \abs{Du} \,dx\bigg)^p +    \norm{F}_{L^\infty(B_{2r}(x_0))}
^{p'}+\norm{D\psi}_{L^\infty(B_{2r}(x_0))}^p +1      }
\end{align*}
where $C=C(n,\Lambda,p,\kappa)>0$. 
\end{lemma}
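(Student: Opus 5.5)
We split the comparison through $w$:
\[
\fint_{B_r(x_0)} (\abs{Du}+\abs{Dv})^{p-2}\abs{Du-Dv}^2\,dx \lesssim \mathrm{I}+\mathrm{II},
\]
with $\mathrm{I}$ the analogous quantity for $(u,w)$ and $\mathrm{II}$ that for $(w,v)$. This splitting is justified by \eqref{eq0130fri4}: the quantity is comparable to $\fint \big|\abs{Du}^{\frac{p-2}{2}}Du-\abs{Dv}^{\frac{p-2}{2}}Dv\big|^2$, for which the triangle inequality is available. Throughout, write $V(\xi):=\abs{\xi}^{\frac{p-2}{2}}\xi$, $\bar{\mathbf a}:=\bar{\mathbf a}^{x_0,r}$, and let $M$ denote the bracket on the right-hand side of the statement.

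\textbf{Step 1 ($u$ vs.\ $w$).} Since $w-u\in W^{1,p}_0(B_r(x_0))$, the obstacle inequality is not directly usable with $w$, so we test with $\varphi=\max\{w,\psi\}$, or better with the truncation $u+(w-u)$ adjusted by $\psi$. The cleanest route is to show that $\varphi:=\max\{w,\psi\}$ is admissible and then combine the two inequalities. Take $\varphi$ in \eqref{main_vari ineq}, and use $w-u$ in \eqref{eq0201sun1}, where the right-hand side there equals $\bar{\mathbf a}(D\psi)$. On the set $\{w<\psi\}$ the difference is controlled by $\bar{\mathbf a}(D\psi)-\bar{\mathbf a}(Dw)$ tested against $(\psi-w)_+$, which has a sign from \eqref{eq0201sun1}, since $\psi-\varphi$-type test functions are admissible there. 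A simpler alternative is a comparison principle showing $w\ge \psi$ is not needed: one writes $\mathbf a(x,Du)-\bar{\mathbf a}(Dw)$ tested by $u-w$ and inserts $\bar{\mathbf a}(Du)$. Either way the outcome should be
\[
\mathrm{I}\lesssim \fint_{B_r}\abs{\mathbf a(x,Du)-\bar{\mathbf a}(Du)}\,\abs{Du-Dw}+\fint_{B_r}\abs{F-\bar F}\,\abs{Du-Dw}+\fint_{B_r}\abs{\bar{\mathbf a}(D\psi)-\overline{\bar{\mathbf a}(D\psi)}}\,\abs{Du-Dw}.
\]
The constants $\bar F$ and $\overline{\bar{\mathbf a}(D\psi)}$ integrate to zero against $D(u-w)$. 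For the $\psi$-term we use \eqref{eq0127tue3} together with $\abs{\bar{\mathbf a}(\xi)-\mathbf a(x,\xi)}\lesssim\abs{A-\bar A}\abs{\xi}^{p-1}$, which turns it into the oscillation of $\abs{D\psi}^{p-2}D\psi$ plus an $A$-oscillation term.

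\textbf{Step 2 ($w$ vs.\ $v$).} Test the difference of \eqref{eq0201sun1} and \eqref{eq0201sun2} with $w-v$. Subtracting the constant $\bar{\mathbf a}$ applied to the mean, this gives
\[
\mathrm{II}\lesssim \fint \abs{\bar{\mathbf a}(D\psi)-c}\,\abs{Dw-Dv}.
\]

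\textbf{Step 3 (absorbing, the main obstacle).} Each error has the form $\fint g\,\abs{D(u-w)}$, where $g$ is either $\abs{A-\bar A}\abs{Du}^{p-1}$ or an oscillation of $F$ or of $\abs{D\psi}^{p-2}D\psi$, and it must be absorbed into $\mathrm I$.

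For $p\ge2$, Young's inequality gives $g\abs{\eta}\le\varepsilon\abs{\eta}^p+C g^{p'}$, and $\abs{\eta}^p\lesssim(\abs{Du}+\abs{Dw})^{p-2}\abs{\eta}^2$. For the $F$ and $\psi$ oscillations this yields $\omega^{(p')}(r)^{p'}=(\omega^{(p')})^{2\cdot\frac{p'}{2}}$, matching the $\frac{p'}{2}$ power in Condition~\ref{datacondi}; the required constant factor $\lesssim M$ is harmless. For the $A$-term we instead write $g\abs{\eta}\le\varepsilon(\abs{Du}+\abs{Dw})^{p-2}\abs{\eta}^2+C\abs{A-\bar A}^2(\abs{Du}+\abs{Dw})^{p}$. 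Then Hölder with exponents $\frac{2+\kappa}{2}$ and $\frac{2+\kappa}{\kappa}$ gives $\omega_A^{(2+\kappa)}(r)^2\big(\fint(\abs{Du}+\abs{Dw})^{p\frac{2+\kappa}{\kappa}}\big)^{\frac{\kappa}{2+\kappa}}$. That last integral is bounded by Lemma~\ref{lem:CZ_variational inequality} with $q=p\frac{2+\kappa}{\kappa}$ (this is why $R_0$ was chosen so) for $u$, and by \eqref{eq0410fri1} for $w$ with $g=u$ and right-hand side $\bar{\mathbf a}(D\psi)$. Finally \eqref{eq0330mon1}-type reduction to $\fint_{B_{2r}}\abs{Du}$ produces $M$.

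For $1<p<2$ the weight $(\abs{Du}+\abs{Dw})^{p-2}$ degenerates, so we write $g\abs{\eta}\le\varepsilon(\abs{Du}+\abs{Dw})^{p-2}\abs{\eta}^2+Cg^2(\abs{Du}+\abs{Dw})^{2-p}$. Hölder with $\frac{2+\kappa}{2}$ then gives $\omega^{(2+\kappa)}(r)^2$ times $\big(\fint(\abs{Du}+\abs{Dw})^{(2-p)\frac{2+\kappa}{\kappa}}\big)^{\frac{\kappa}{2+\kappa}}$, which is controlled again by the choice of $q$ in \eqref{eq:R0}. The total power is at most $M$ once $M\ge1$ is used (the ``$+1$'' term covers the case where the power is below $p$).

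I expect the real difficulty to be Step 1's handling of the obstacle constraint, i.e.\ producing an admissible competitor and a correct sign on the contact set. This is also the reason $w$ is taken with right-hand side $\bar{\mathbf a}(D\psi)$: on $\{u=\psi\}$ this makes the forcing match the constraint.
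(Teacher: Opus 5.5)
Your architecture is the paper's: split through $w$ using \eqref{eq0130fri4}, compare $u$ with $w$ to produce the coefficient, obstacle and source errors, and compare $w$ with $v$ by testing with $w-v$. You then absorb via Young/H\"older with exponents $\frac{2+\kappa}{2},\frac{2+\kappa}{\kappa}$ and the Calder\'on--Zygmund bounds at the exponents fixed in \eqref{eq:R0}. Your Steps 2 and 3 match the paper essentially line by line. The extra $A$-oscillation term you generate in the $\psi$-error is unnecessary, since $\bar{\mathbf a}$ has constant coefficients and \eqref{eq0127tue3} applies to it directly, but it is harmless.

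\textbf{The gap is Step 1.} You flag it as the crux and then leave it unresolved (``either way the outcome should be'').

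\emph{Why the two routes you sketch fail.} The inequality $\fint\mathbf a(x,Du)\cdot D(w-u)\,dx\ge\fint F\cdot D(w-u)\,dx$ can only come from inserting an admissible competitor into \eqref{main_vari ineq}. So the variant in which $w\ge\psi$ is ``not needed'' does not work. The solution $u$ satisfies $-\dv\mathbf a(x,Du)=-\dv F+\mu$ with a measure $\mu\ge0$ carried by $\{u=\psi\}$. Testing with $w-u$ therefore leaves $\int(w-\psi)\,d\mu$, which has the right sign only if $w\ge\psi$ on the contact set. The truncation $\varphi=\max\{w,\psi\}$ does not rescue things with merely ``a sign'' either. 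The leftover term $\fint(F-\mathbf a(x,Du))\cdot D(\psi-w)_+\,dx$ is controlled by nothing.

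\emph{The missing idea} is the one the paper uses. The right-hand side of \eqref{eq0201sun1} is chosen precisely so that $\psi$ itself solves the same equation as $w$.
\begin{itemize}
\item Since $0\le(\psi-w)_+\le(u-w)_+$, the function $(\psi-w)_+$ lies in $W^{1,p}_0(B_r(x_0))$ and is a legitimate test function.
\item Testing \eqref{eq0201sun1} with it gives $\int_{\{\psi>w\}}(\bar{\mathbf a}(D\psi)-\bar{\mathbf a}(Dw))\cdot(D\psi-Dw)\,dx=0$.
\item By \eqref{eq0127tue2}, $D(\psi-w)_+=0$, so $w\ge\psi$ a.e.\ in $B_r(x_0)$. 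This is the comparison principle.
\end{itemize}
Hence $w$, extended by $u$, belongs to $\mathcal{A}_\psi$ and can be taken as $\varphi$ directly. Your truncation collapses to exactly this, since $\max\{w,\psi\}=w$.

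Your stated rationale for the choice of right-hand side (matching the forcing on $\{u=\psi\}$) misses this comparison-principle role, which is the real reason for it.
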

\begin{proof}
By noting that $w=u\geq \psi$ on $\partial B_r(x_0)$ and by comparison principle in \eqref{eq0201sun1},  we observe that $w\geq \psi $ a.e. in $B_r(x_0)$, see e.g. \cite[Lemma 3.5]{BCW12}. Thus, setting $w:=u$ in $\Omega\backslash B_r(x_0)$, we see that $w\in \mathcal{A}_\psi$.
 
Note that we can rewrite the following using the right-hand side in \eqref{eq0201sun1}
\begin{equation}   \label{eq0205thr2}
\begin{split}
&-\dv ( \ip{\bar{A}^{x_0,r}Dw, Dw }^{\frac{p-2}{2}} \bar{A}^{x_0,r} Dw)\\
&=- \dv \bigg( \ip{\bar{A}^{x_0,r} D\psi, D\psi }^{\frac{p-2}{2}} \bar{A}^{x_0,r} D\psi - \overline{\ip{\bar{A}^{x_0,r}D\psi, D\psi}^{\frac{p-2}{2}} \bar{A}^{x_0,r} D\psi}^{x_0,r}+\bar{F}^{x_0,r}\bigg) 
\end{split}
\end{equation}
Then by taking $w-u\in W^{1,p}_0(B_r(x_0))$ as a test function in \eqref{eq0205thr2} and taking $\varphi=w$ in \eqref{main_vari ineq}, we obtain that
\begin{align*}
& \fint_{B_r(x_0)} \big( \ip{ \bar{A}^{x_0,r}  Dw,Dw}^{\frac{p-2}{2}} \bar{A}^{x_0,r} Dw -\ip{ \bar{A}^{x_0,r}  Du,Du}^{\frac{p-2}{2}} \bar{A}^{x_0,r} Du \big) \cdot (Dw-Du) \,dx \\
& \leq \fint_{B_r(x_0)} \big( \ip{ A(x) Du,Du}^{\frac{p-2}{2}} A(x) Du -\ip{ \bar{A}^{x_0,r}  Du,Du}^{\frac{p-2}{2}} \bar{A}^{x_0,r} Du \big) \cdot (Dw-Du) \,dx 
\\ &\quad  + \fint_{B_r(x_0)}  \bigg( \ip{ \bar{A}^{x_0,r} D\psi,D\psi}^{\frac{p-2}{2}} \bar{A}^{x_0,r} D\psi   -\overline{\ip{\bar{A}^{x_0,r}D\psi, D\psi}^{\frac{p-2}{2}} \bar{A}^{x_0,r} D\psi}^{x_0,r} \bigg) \cdot  (Dw-Du)\, dx
\\ &\quad  + \fint_{B_r(x_0)} (\bar{F}^{x_0,r}-F(x)) \cdot  (Dw-Du) \,dx
\\ & :=I_1+I_2+I_3.
\end{align*}
Applying \eqref{eq0127tue2} and \eqref{eq0130fri4} to the left-hand side of the above inequality, we have
\begin{equation} \label{eq0304wed1}
\frac{1}{\tilde C_0}\fint_{B_r(x_0)} \big(  \abs{Du}+\abs{Dw} \big)^{p-2}\abs{Du-Dw}^2 \,dx\leq I_1+I_2+I_3
\end{equation}
for some $\tilde C_0\ge 1$.

We first estimate $I_1$. By using \eqref{eq0202thr1} and  Young's inequality,  we have
\begin{align*}
I_1&\leq C  \fint_{B_r(x_0)} \abs{ A(x) -\bar{A}^{x_0,r} } \abs{Du}^{p-1} \abs{Dw-Du} \,dx \\
 &\leq C  \fint_{B_r(x_0)} \abs{ A(x) -\bar{A}^{x_0,r} }  \big(  \abs{Du} +\abs{Dw} \big)^{\frac{p}{2} +\frac{p-2}{2}} \abs{Dw-Du}\, dx\\
 &\leq C  \fint_{B_r(x_0)} \abs{ A(x) -\bar{A}^{x_0,r} } ^2\big(  \abs{Du} +\abs{Dw} \big)^p \,dx \\
 &\qquad\qquad\qquad\qquad+ \frac{1}{6\tilde C_0} \fint_{B_r(x_0)} \big(  \abs{Du}+\abs{Dw} \big)^{p-2}\abs{Du-Dw}^2 \,dx
\end{align*}
Moreover, from the definition of $L^{2+\kappa}$-mean oscillation of $A$, \eqref{eq0410fri1}, and \eqref{eq0128wed2} with $q=p\frac{2+\kappa}{\kappa}$ and $t=1/p,$  we further have
\begin{align*}
& \fint_{B_r(x_0)} \abs{ A(x) -\bar{A}^{x_0,r} } ^2\big(  \abs{Du} +\abs{Dw} \big)^p \,dx\\ 
& \leq \bigg(  \fint_{B_r(x_0)} \abs{ A(x) -\bar{A}^{x_0,r} }^{2+\kappa} \, dx\bigg) ^\frac{2}{2+\kappa} \bigg(  \fint_{B_r(x_0)} \big(  \abs{Du} +\abs{Dw} \big)^{p\frac{2+\kappa}{  \kappa}}\,dx \bigg) ^\frac{ \kappa}{2+\kappa}\\
&\leq \omega^{(2+\kappa)}_A(r,x_0)^2  \bigg( \fint_{B_{r}(x_0)} \abs{Du}^{p\frac{2+\kappa}{ \kappa}} + \abs{D\psi}^{p\frac{2+\kappa}{\kappa}} \, dx \bigg)^{\frac{\kappa}{2+\kappa}}  \\
&\leq \omega^{(2+\kappa)}_A(r,x_0)^2 \biggset{  \bigg(\fint_{B_{2r}(x_0)}   \abs{Du}  \, dx\bigg)^p+  \norm{F}_{L^\infty(B_{2r}(x_0))}
^{p'}+\norm{D\psi}_{L^\infty(B_{2r}(x_0))}^p}.
\end{align*}

We next estimate $I_2$. Suppose $p\geq 2$.
Then, by using \eqref{eq0127tue3}, Young's inequality, and definition of $L^{p'}$-mean oscillation for obstacle $\psi$, we have
\begin{align*}
I_2
&\leq C \fint_{B_r(x_0)}  \fint_{B_r(x_0)}\big| \ip{\bar{A}^{x_0,r} D\psi(x), D\psi (x)}^{\frac{p-2}{2}} \bar{A}^{x_0,r} D\psi(x)\\
&\qquad \qquad\qquad\qquad\quad  - \ip{\bar{A}^{x_0,r} D\psi(y), D\psi(y) }^{\frac{p-2}{2}} \bar{A}^{x_0,r} D\psi(y) \big| \,\abs{Dw(x)-Du(x)}\,dx\,dy\\
&\leq C \fint_{B_r(x_0)}  \fint_{B_r(x_0)}   \big| \abs{D\psi(x)}^{p-2}D\psi(x)-\abs{D\psi(y)}^{p-2}D\psi(y) \big|\abs{Dw(x)-Du(x)}\,dx\,dy\\
&\leq C\fint_{B_r(x_0)}  \fint_{B_r(x_0)}   \big| \,\abs{D\psi(x)}^{p-2}D\psi(x)-\abs{D\psi(y)}^{p-2}D\psi(y) \big|^{p'}\,dx\,dy\\
&\qquad  \qquad\qquad\qquad \qquad\qquad\qquad\qquad\quad +\frac{1}{6\tilde C_0} \fint_{B_r(x_0)}   \abs{Du-Dw}^p \,dx\\
&\leq  C \bigg(  \fint_{B_r(x_0)}  \big| \abs{D\psi(y)}^{p-2}D\psi(y)-\overline{\abs{D\psi}^{p-2}D\psi}^{x_0,r} \big|^{p'}\,dy\bigg)^{\frac{1}{p'}\frac{p'}{2}2}\\
&\qquad \qquad \qquad\qquad \qquad\qquad\qquad\quad +\frac{1}{6\tilde C_0} \fint_{B_r(x_0)} \big(  \abs{Du}+\abs{Dw} \big)^{p-2}\abs{Du-Dw}^2\, dx\\
&\leq  C \omega_{D\psi}^{(p')}(r,x_0)^{\frac{p'}{2}2}+ \frac{1}{6 \tilde C_0} \fint_{B_r(x_0)} \big(  \abs{Du}+\abs{Dw} \big)^{p-2}\abs{Du-Dw}^2 \,dx.
\end{align*}
On the other hand, if $1<p<2$,  by a similar argument as above, together with \eqref{eq0127tue3}, and using \eqref{eq0130fri4}, and Young's inequality, we obtain
\begin{align*}
I_2&\leq   C\fint_{B_r(x_0)}  \fint_{B_r(x_0)}   \big| \abs{D\psi(x)}^{p-2}D\psi(x)-\abs{D\psi(y)}^{p-2}D\psi(y) \big|^{2} \big( \abs{Du(x)}+\abs{Dw(x)}\big)^{2-p}\,dx\,dy \\
&\qquad\qquad\qquad\qquad+ \frac{1}{6\tilde C_0} \fint_{B_r(x_0)} \big(  \abs{Du}+\abs{Dw} \big)^{p-2}\abs{Du-Dw}^2 \,dx.
\end{align*}
By H\"{o}lder's inequality, using the definition of $L^{2+\kappa}$-mean oscillation of $|D\psi|^{p-2}D\psi$, \eqref{eq0410fri1}, \eqref{eq0128wed2} with $q= \frac{(2-p)(2+\kappa)}{\kappa}$ and $t=1/p$, and \eqref{eq0130fri1}, we have
\begin{align*}
&\fint_{B_r(x_0)}  \fint_{B_r(x_0)}   \big| \abs{D\psi(x)}^{p-2}D\psi(x)-\abs{D\psi(y)}^{p-2}D\psi(y) \big|^{2} \big( \abs{Du(x)}+\abs{Dw(x)}\big)^{2-p}\,dx \,dy  \\
& \leq C \fint_{B_r(x_0)}   \bigg( \fint_{B_r(x_0)} \big| \abs{D\psi(x)}^{p-2}D\psi(x)-\abs{D\psi(y)}^{p-2}D\psi(y) \big|^{2+\kappa} dx \bigg)^{\frac{2}{2+\kappa}} \, dy \\
&\qquad\qquad\qquad\qquad \qquad\qquad\qquad\qquad \times\bigg( \fint_{B_r(x_0)} \big( \abs{Du}+\abs{Dw}\big)^{\frac{(2-p)(2+\kappa)}{\kappa}} \,dx \bigg)^{\frac{ \kappa}{2+\kappa}} \\
&\leq C\bigg[   \bigg( \fint_{B_r(x_0)} \big| \abs{D\psi(x)}^{p-2}D\psi(x)-\overline{\abs{D\psi}^{p-2}D\psi}^{x_0,r} \big|^{2+\kappa}  \, dx  \bigg)^{\frac{2}{2+\kappa}}\\
&\qquad\qquad\qquad\qquad\qquad\qquad+  \fint_{B_r(x_0)}\big| \abs{D\psi(y)}^{p-2}D\psi(y)-\overline{\abs{D\psi}^{p-2}D\psi}^{x_0,r} \big|^2\, dy \bigg]\\
&\qquad\qquad\qquad\qquad \qquad\qquad\qquad\qquad \times\bigg( \fint_{B_r(x_0)} \big( \abs{Du}+\abs{D\psi}\big)^{\frac{(2-p)(2+\kappa)}{\kappa}} \,dx \bigg)^{\frac{ \kappa}{2+\kappa}} \\
&\leq C\omega_{|D\psi|^{p-2}D\psi}^{(2+\kappa)}(r,x_0)^2 \biggset{    \bigg(\fint_{B_{2r}(x_0)} \abs{Du}\, dx\bigg)^p +  \norm{F}_{L^\infty(B_{2r}(x_0))}
^{p'}+\norm{D\psi}_{L^\infty(B_{2r}(x_0))}^p   +1  },
\end{align*}
where we use the inequality $0<2-p<p$ and assume that $\frac{(2-p)(2+\kappa)}{\kappa}>p$ without loss generality.

We finally estimate $I_3$. In the same way to estimate $I_2$ with $\ip{\bar{A}^{x_0,r} D\psi, D\psi}^{\frac{p-2}{2}} \bar{A}^{x_0,r} D\psi$ and
$\overline{\ip{\bar{A}^{x_0,r}D\psi, D\psi}^{\frac{p-2}{2}} \bar{A}^{x_0,r} D\psi}^{x_0,r}$ replaced by $F$ and $\bar{F}^{x_0,r}$, respectively, we can obtain that, when $p\ge 2$, 
\begin{align*}
I_3
\leq C \omega_F^{(p')}  (r,x_0)  ^{\frac{p'}{2}2 }+\frac{1}{6\tilde C_0} \fint_{B_r(x_0)} \big(  \abs{Du}+\abs{Dw} \big)^{p-2}\abs{Du-Dw}^2  \, dx, 
\end{align*}
and, when $1<p<2$,   
\begin{align*}
I_3&\leq  C\omega_F^{(2+\kappa)}(r,x_0)^2 \biggset{    \bigg(\fint_{B_{2r}(x_0)} \abs{Du}\, dx\bigg)^p+   \norm{F}_{L^\infty(B_{2r}(x_0))}
^{p'}+\norm{D\psi}_{L^\infty(B_{2r}(x_0))}^p +1   }  \\
&\qquad +\frac{1}{6\tilde C_0} \fint_{B_r(x_0)} \big(  \abs{Du}+\abs{Dw} \big)^{p-2}\abs{Du-Dw}^2 \, dx.
\end{align*}

Consequently, by combining the above estimates, we obtain
\begin{align} \label{eq0304wed2}
\begin{split}
&\fint_{B_r(x_0)} \big(  \abs{Du}+\abs{Dw} \big)^{p-2}\abs{Du-Dw}^2 \,dx\\
&\leq C \omega_{\mathsf{data}}(r,x_0)^2\biggset{    \bigg(\fint_{B_{2r}(x_0)} \abs{Du} \, dx\bigg)^p +  \norm{F}_{L^\infty(B_{2r}(x_0))}
^{p'}+\norm{D\psi}_{L^\infty(B_{2r}(x_0))}^p+1    }
\end{split}
\end{align}
where $C=C(n,p,\Lambda)>0$.

Now, we derive a comparison estimate between $Dw$ and $Dv$. By taking $w-v\in W^{1,p}_0(B_r(x_0))$ as a test function in \eqref{eq0201sun1} and in \eqref{eq0201sun2} and using \eqref{eq0127tue2}, we have
\begin{align*}
&\fint_{B_r(x_0)} \big(  \abs{Dw}+\abs{Dv} \big)^{p-2}\abs{Dw-Dv}^2 \,dx\\
& \le C \fint_{B_r(x_0)} \big( \ip{ \bar{A}^{x_0,r}  Dw,Dw}^{\frac{p-2}{2}} \bar{A}^{x_0,r} Dw -\ip{ \bar{A}^{x_0,r}  Dv,Dv}^{\frac{p-2}{2}} \bar{A}^{x_0,r} Dv \big) \cdot (Dw-Dv) \,dx \\
&=C\fint_{B_r(x_0)}  \bigg( \ip{ \bar{A}^{x_0,r} D\psi,D\psi}^{\frac{p-2}{2}} \bar{A}^{x_0,r} D\psi   -\overline{\ip{\bar{A}^{x_0,r}D\psi, D\psi}^{\frac{p-2}{2}} \bar{A}^{x_0,r} D\psi}^{x_0,r}\bigg) \cdot  (Dw-Dv) \,dx .
\end{align*}
Then, applying the same estimation of $I_2$ in \eqref{eq0304wed1} to the right hand side of the above estimate, we can estimate that
\begin{equation}\label{eq0304wed3}
\begin{split}
&\fint_{B_r(x_0)} \big(  \abs{Dw}+\abs{Dv} \big)^{p-2}\abs{Dw-Dv}^2 \,dx \\
&\leq  C \bar{\omega}_{|D\psi|^{p-2}D\psi}(r,x_0)^{2} \biggset{  \bigg(\fint_{B_r(x_0)} \abs{Du} \, dx \bigg)^p   +   \norm{F}_{L^\infty(B_{2r}(x_0))}
^{p'}+\norm{D\psi}_{L^\infty(B_{2r}(x_0))}^p+1     }
\end{split}
\end{equation}
for some $C=C(n,p, \Lambda)>0$, where $\bar{\omega}_{\cdot}(r,x_0):= \omega_{\cdot}^{(p')}(r,x_0)^{\frac{p'}{2}}$ if $p\ge 2$ and $\bar{\omega}_{\cdot}(r,x_0):= \omega_{\cdot}^{(2+\kappa)}(r,x_0)$ if $1<p\le 2$.

Therefore, since  $ (  \abs{Du}+\abs{Dv} )^{p-2}\abs{Du-Dv}^2  \le C \{ ( \abs{Du}+\abs{Dw} )^{p-2}\abs{Du-Dw}^2  + (  \abs{Dw}+\abs{Dv} )^{p-2}\abs{Dw-Dv}^2 \}$  by \eqref{eq0130fri4}, we obtain the desired estimate  from \eqref{eq0304wed2} and \eqref{eq0304wed3}.
\end{proof}

From the previous lemma, we immediately obtain the following $L^1$ comparison estimate between $Du$ and $Dv$.
\begin{corollary} \label{coro0304wed1}
Under the setting in the beginning of the section,  we have
\begin{equation*}
\begin{aligned}
&\fint_{B_r(x_0)} \abs{ Du-Dv} \, dx\\
&\leq C_2 \omega_{\mathsf{data}}(r,x_0)^{\frac{\min\set{2,p}}{p}}\biggset{    \fint_{B_{2r}(x_0)} \abs{Du} \, dx +   \norm{F}_{L^\infty(B_{2r}(x_0))}
^{\frac{1}{p-1}}+\norm{D\psi}_{L^\infty(B_{2r}(x_0))} +1     }
\end{aligned}
\end{equation*}
 
where $C_2=C_2(n,p,\Lambda)\geq 1$.
\end{corollary}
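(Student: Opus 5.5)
Denote by $M := \fint_{B_{2r}(x_0)}\abs{Du}\,dx + \norm{F}_{L^\infty(B_{2r}(x_0))}^{\frac{1}{p-1}}+\norm{D\psi}_{L^\infty(B_{2r}(x_0))}+1$. The bracket in Lemma~\ref{lemma0205thr1} is then bounded by $CM^p$, since $\norm{F}^{p'} = (\norm{F}^{\frac1{p-1}})^p$. Hence Lemma~\ref{lemma0205thr1} reads
\[
\fint_{B_r(x_0)}(\abs{Du}+\abs{Dv})^{p-2}\abs{Du-Dv}^2\,dx \le C\,\omega_{\mathsf{data}}(r,x_0)^2 M^p .
\]
The plan is to pass from this weighted quadratic quantity to the $L^1$ norm of $Du-Dv$ by H\"older's inequality. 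The argument splits into the cases $p\ge2$ and $1<p<2$, which produce the exponents $2/p$ and $1$ respectively; this is exactly $\min\{2,p\}/p$.

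\emph{Case $p\ge 2$.} Use $\abs{Du-Dv}^p\le(\abs{Du}+\abs{Dv})^{p-2}\abs{Du-Dv}^2$. Then Jensen's inequality gives
\[
\fint_{B_r(x_0)}\abs{Du-Dv}\,dx\le\Big(\fint_{B_r(x_0)}\abs{Du-Dv}^p\,dx\Big)^{1/p}\le C\,\omega_{\mathsf{data}}(r,x_0)^{2/p}M .
\]

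\emph{Case $1<p<2$.} Write
\[
\abs{Du-Dv}=\big[(\abs{Du}+\abs{Dv})^{\frac{p-2}{2}}\abs{Du-Dv}\big]\,(\abs{Du}+\abs{Dv})^{\frac{2-p}{2}}
\]
and apply Cauchy--Schwarz. This gives
\[
\fint_{B_r(x_0)}\abs{Du-Dv}\,dx\le C\,\omega_{\mathsf{data}}(r,x_0)M^{p/2}\Big(\fint_{B_r(x_0)}(\abs{Du}+\abs{Dv})^{2-p}\,dx\Big)^{1/2}.
\]
Since $2-p<p$, H\"older's inequality bounds the last average by $\big(\fint_{B_r(x_0)}(\abs{Du}^p+\abs{Dv}^p)\,dx\big)^{(2-p)/p}$. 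Next, \eqref{eq0410fri3} controls $\fint_{B_r(x_0)}\abs{Dv}^p$ by $\fint_{B_r(x_0)}(\abs{Du}^p+\abs{D\psi}^p)$. The reverse H\"older inequality \eqref{eq0330mon1} with $t=1/p$ then gives $\fint_{B_r(x_0)}\abs{Du}^p\le CM^p$, and trivially $\fint_{B_r(x_0)}\abs{D\psi}^p\le M^p$. So the last factor is at most $CM^{(2-p)/2}$, and the total is $C\,\omega_{\mathsf{data}}(r,x_0)M$.

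In both cases $M$ appears to the first power, which matches the statement. The only delicate point is the sub-quadratic case: the weight $(\abs{Du}+\abs{Dv})^{2-p}$ must be controlled by the same quantity $M$. This is handled by the energy estimate \eqref{eq0410fri3} together with the reverse H\"older inequality \eqref{eq0330mon1}, exactly as in the proof of \eqref{eq0410fri2}. Everything else is routine.
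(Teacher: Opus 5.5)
Your proof is correct and takes essentially the same route as the paper. For $p\ge 2$ you use $\abs{Du-Dv}^p\le(\abs{Du}+\abs{Dv})^{p-2}\abs{Du-Dv}^2$ and Jensen. For $1<p<2$ you split off the weight $(\abs{Du}+\abs{Dv})^{2-p}$ by H\"older and control it through \eqref{eq0410fri3} and the reverse H\"older inequality \eqref{eq0330mon1} with $t=1/p$. The only cosmetic difference is that the paper applies H\"older with exponents $\frac{2}{2-p},\frac{2}{p}$ to $\fint\abs{Du-Dv}^p$ and then takes the $p$-th root, whereas you apply Cauchy--Schwarz directly to the $L^1$ norm.
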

\begin{proof}
The case $p\geq2$ follows trivially from the previous lemma using H\"{o}lder's inequality. On the other hand, if $1<p<2$, by using H\"{o}lder's inequality and \eqref{eq0410fri3}, we obtain
\begin{align*}
&\fint_{B_r(x_0)} \abs{ Du-Dv}^p\, dx =\fint_{B_r(x_0)} ( \abs{Du}+\abs{Dv})^{\frac{p(2-p)}{2}+\frac{p(p-2)}{2}}\abs{ Du-Dv}^p\, dx\\
&\leq C    \bigg(  \fint_{B_r(x_0)}  ( \abs{Du}+\abs{Dv})^p \, dx \bigg)^{\frac{2-p}{2}} \bigg( \fint_{B_r(x_0)} ( \abs{Du}+\abs{Dv})^{p-2}\abs{ Du-Dv}^2 \, dx \bigg)^{\frac{p}{2}}\\
&\leq C   \omega_{\mathsf{data}}(r,x_0)^2 \bigg(    \fint_{B_r(x_0)}  \abs{D\psi}^p+ \abs{Du}^p\,  dx  \bigg)^{\frac{2-p}{2}}\\
&\qquad\qquad\times\biggset{    \fint_{B_{2r}(x_0)} \abs{Du} \, dx +   \norm{F}_{L^\infty(B_{2r}(x_0))}
^{\frac{1}{p-1}}+\norm{D\psi}_{L^\infty(B_{2r}(x_0))} +1    }^{\frac{p}{2}}
\end{align*}
Combining this with H\"{o}lder's inequality and \eqref{eq0330mon1} with $t=1/p$, we obtain the desired estimate for the case $1<p<2$.
\end{proof}

The next lemma provides a sharper comparison estimate for $Du$ and $Dv$ under a lower bound assumption on $Dv$.

\begin{lemma} \label{lemma0327fri1}
Under the setting in the beginning of the section, there exists $C_3=C_3(n,p,\Lambda)>0$ such that if 
\[
\frac{M}{N} \leq \inf_{B_\rho(x_0)}\abs{Dv}  
\]
and
\[
  \fint_{B_{2r}(x_0)} \abs{Du}\, dx  +   \norm{F}_{L^\infty(B_{2r}(x_0))}
^{\frac{1}{p-1}}+\norm{D\psi}_{L^\infty(B_{2r}(x_0))} +1  \leq M
\]
for some $M$, $N\geq 1$ and $B_\rho(x_0)\subset  B_r(x_0)$, then
\[
 \fint_{B_\rho(x_0)}\abs{Du-Dv}\, dx\leq C_3 N^{p-1}\omega_{\mathsf{data}}(r,x_0)   \bigg(\frac{r}{\rho}\bigg)^n M
\]
\end{lemma}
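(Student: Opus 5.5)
We start from the weighted comparison estimate of Lemma~\ref{lemma0205thr1} and use the lower bound on $\abs{Dv}$ to convert it into an $L^1$ bound with the full power $\omega_{\mathsf{data}}(r,x_0)$, rather than the power $\min\{2,p\}/p$ of Corollary~\ref{coro0304wed1}. Write
\[
E:=\fint_{B_r(x_0)} (\abs{Du}+\abs{Dv})^{p-2}\abs{Du-Dv}^2\,dx .
\]
By Lemma~\ref{lemma0205thr1} and the hypothesis on $M$ we get $E\le C\,\omega_{\mathsf{data}}(r,x_0)^2M^p$. Restricting to the smaller ball costs at most a factor $(r/\rho)^n$:
\[
\fint_{B_\rho(x_0)} (\abs{Du}+\abs{Dv})^{p-2}\abs{Du-Dv}^2\,dx\le (r/\rho)^n E .
\]

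\textbf{Case $p\ge 2$.} On $B_\rho(x_0)$ we have $(\abs{Du}+\abs{Dv})^{p-2}\ge \abs{Dv}^{p-2}\ge (M/N)^{p-2}$. Hence
\[
\fint_{B_\rho}\abs{Du-Dv}^2\,dx\le C N^{p-2}M^{2-p}(r/\rho)^n\,\omega_{\mathsf{data}}(r,x_0)^2M^p .
\]
By H\"older's inequality,
\[
\fint_{B_\rho}\abs{Du-Dv}\,dx\le C N^{\frac{p-2}{2}}(r/\rho)^{n/2}\,\omega_{\mathsf{data}}(r,x_0)\,M,
\]
which is even better than claimed, since $N^{(p-2)/2}\le N^{p-1}$ and $(r/\rho)^{n/2}\le (r/\rho)^n$.

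\textbf{Case $1<p<2$.} This is the main obstacle, because the weight $(\abs{Du}+\abs{Dv})^{p-2}$ is small where $\abs{Du}$ is large, so the lower bound on $\abs{Dv}$ does not directly give a lower bound on the weight. I would split $B_\rho(x_0)$ into two sets.

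On $S_1=\{\abs{Du}\le 2\abs{Dv}\}$ the weight is comparable to $\abs{Dv}^{p-2}$. Applying Cauchy--Schwarz with weights gives
\[
\fint_{B_\rho}\abs{Du-Dv}\chi_{S_1}\,dx\le \Big(\fint_{B_\rho}(\abs{Du}+\abs{Dv})^{p-2}\abs{Du-Dv}^2\Big)^{1/2}\Big(\fint_{B_\rho}(3\abs{Dv})^{2-p}\Big)^{1/2}.
\]
The last factor is bounded using $\norm{Dv}_{L^\infty(B_{r/2})}\lesssim \fint_{B_r}\abs{Dv}\lesssim M$ from Lemma~\ref{lem:C1alpha} together with \eqref{eq0410fri2}. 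If $\rho>r/2$, one can instead use an $L^{2-p}$ bound via \eqref{eq0410fri3}, at the cost of powers of $r/\rho$.

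On $S_2=\{\abs{Du}>2\abs{Dv}\}$ we have $\abs{Du-Dv}\sim\abs{Du}$, so the integrand is comparable to $\abs{Du-Dv}^p$. Moreover $\abs{Du-Dv}\ge \abs{Dv}\ge M/N$ there. Hence
\[
\abs{Du-Dv}\le (N/M)^{p-1}\abs{Du-Dv}^p\le C(N/M)^{p-1}(\abs{Du}+\abs{Dv})^{p-2}\abs{Du-Dv}^2 .
\]
Integrating this pointwise bound gives a contribution $C N^{p-1}M^{1-p}(r/\rho)^n\,\omega_{\mathsf{data}}(r,x_0)^2M^p$.

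Finally, $\omega_{\mathsf{data}}$ is bounded by a constant: by ellipticity $A$ is bounded, and $\mathsf{data}$ are locally bounded here since the statement already uses $L^\infty$ norms of $F$ and $D\psi$. So we may replace $\omega_{\mathsf{data}}^2$ by $C\omega_{\mathsf{data}}$, which yields the stated bound $C_3N^{p-1}\omega_{\mathsf{data}}(r,x_0)(r/\rho)^nM$. The places I expect to need care are exactly this last step (bounding $\omega_{\mathsf{data}}$ by a constant, in particular its oscillation terms) and the bookkeeping of the $(r/\rho)$ powers in the set $S_1$ when $\rho$ is not small compared with $r$.
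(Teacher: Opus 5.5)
Your case $p\ge 2$ is correct: the lower bound $(\abs{Du}+\abs{Dv})^{p-2}\ge (M/N)^{p-2}$ plus H\"older even gives $N^{(p-2)/2}(r/\rho)^{n/2}$. The case $1<p<2$, however, fails at its last step.

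On $S_2$ you arrive at $C N^{p-1}(r/\rho)^n\,\omega_{\mathsf{data}}(r,x_0)^2M$ and then assert that $\omega_{\mathsf{data}}$ is bounded by a constant. This is false for a constant depending only on $n,p,\Lambda$. Ellipticity controls $\omega^{(2+\kappa)}_A$, but $\omega^{(2+\kappa)}_F(r,x_0)$ and $\omega^{(2+\kappa)}_{\abs{D\psi}^{p-2}D\psi}(r,x_0)$ are only bounded by $2\norm{F}_{L^\infty(B_r(x_0))}+2\norm{D\psi}_{L^\infty(B_r(x_0))}^{p-1}\lesssim M^{p-1}$. Inserting this turns your $S_2$ term into $CN^{p-1}(r/\rho)^n\omega_{\mathsf{data}}(r,x_0)M^{p}$, which has the wrong homogeneity in $M$.

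The step can be saved by a dichotomy you did not make. If $\omega_{\mathsf{data}}(r,x_0)\ge 1$, the claim is trivial, since $\fint_{B_\rho(x_0)}\abs{Du-Dv}\,dx\le (r/\rho)^n\fint_{B_r(x_0)}(\abs{Du}+\abs{Dv})\,dx\le (2^n+C_1)(r/\rho)^nM$ by \eqref{eq0410fri2}. If $\omega_{\mathsf{data}}(r,x_0)<1$, then $\omega_{\mathsf{data}}^2\le\omega_{\mathsf{data}}$.

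More to the point, $1<p<2$ is not the hard case. Cauchy--Schwarz with the weight $(\abs{Du}+\abs{Dv})^{2-p}$ on all of $B_\rho(x_0)$ (no splitting), then Jensen ($2-p<1$) and \eqref{eq0410fri2}, give $\fint_{B_\rho(x_0)}\abs{Du-Dv}\,dx\le C(r/\rho)^{n(3-p)/2}\omega_{\mathsf{data}}(r,x_0)M$. This uses no lower bound on $\abs{Dv}$. It is essentially Corollary~\ref{coro0304wed1}, whose exponent $\min\{2,p\}/p$ equals $1$ for $p\le 2$; the lemma only has real content for $p>2$.

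The paper treats all $p>1$ at once by Young's inequality with parameter $\omega=\omega_{\mathsf{data}}(r,x_0)$:
\[
\abs{Du-Dv}\le \omega(\abs{Du}+\abs{Dv})+\frac{1}{4\omega}(\abs{Du}+\abs{Dv})^{p-2}(\abs{Du}+\abs{Dv})^{1-p}\abs{Du-Dv}^2 ,
\]
where $(\abs{Du}+\abs{Dv})^{1-p}\le (N/M)^{p-1}$ on $B_\rho(x_0)$ because $1-p<0$. The first term is controlled by \eqref{eq0410fri2} and the second by Lemma~\ref{lemma0205thr1}. Choosing the Young parameter equal to $\omega$ is what produces $\omega_{\mathsf{data}}$ to the first power. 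This argument needs only a lower bound on $\abs{Du}+\abs{Dv}$, never on the weight $(\abs{Du}+\abs{Dv})^{p-2}$, so your concern about $p<2$ does not arise there.
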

\begin{proof}
Note that $\abs{Dv}+\abs{Du}\geq M/N$ in $B_\rho(x_0)$. By Young's inequality, \eqref{eq0410fri2}, Lemma \ref{lemma0205thr1}, and the second assumption, we obtain that
\begin{align*}
\fint_{B_\rho(x_0)}\abs{Du-Dv}  \, dx&= 
\bigg(\frac{r}{\rho}\bigg)^n   \fint_{B_t(x_0)}\big(\frac{N}{M}\big)^{\frac{p-1}{2}}(|Du|+|Dv|)^{\frac{p-1}{2}}\abs{Du-Dv}  \, dx\\
& \leq  \omega_{\mathsf{data}}(r,x_0)\bigg(\frac{r}{\rho}\bigg)^n  \fint_{B_r(x_0)} \big(\abs{Du}+\abs{Dv}\big)\,  dx\\
&\quad +\frac{N^{p-1}}{\omega_{\mathsf{data} } (r,x_0) M^{p-1}} \bigg(\frac{r}{\rho}\bigg)^n \fint_{B_r(x_0)}\big(\abs{Du}+\abs{Dv}\big)^{p-2}\abs{Du-Dv}^{2}\, dx\\
&\leq  \omega_{\mathsf{data}}(r,x_0) \bigg(\frac{r}{\rho}\bigg)^n  M+
+C \omega_{\mathsf{data}}(r,x_0) N^{p-1}  \bigg(\frac{r}{\rho}\bigg)^n M\\
&\leq C \omega_{\mathsf{data}}(r,x_0) N^{p-1}  \bigg(\frac{r}{\rho}\bigg)^n M. \qedhere
\end{align*}
 
\end{proof} 

\section{Regularity of the Gradient}  \label{sec4}

In this section, we prove the gradient continuity of the solution $u\in \mathcal{A}_\psi$ to the variational inequality \eqref{main_vari ineq}, that is Theorem \ref{main_theorem}. Hence, we assume that the mean oscillation of the $\mathsf{data}$ satisfies the Dini condition with constant $\kappa>0$.

We recall the constants $C_\omega$, $\alpha$,  $C_0$, $R_0$,  $C_1$, $C_2$, and $C_3$ from Lemma~\ref{lem:Dini}, Lemma~\ref{lem:C1alpha}, \eqref{eq:R0}, \eqref{eq0410fri2}, Corollary~\ref{coro0304wed1}, and Lemma~\ref{lemma0327fri1}. 
For $\epsilon\in(0,\epsilon_0]$,  where $\epsilon_0:=2^{-n-2}$, we choose 
$\delta=\delta(\epsilon) \in (0,\frac{1}{4})$ such that
\begin{align}\label{eq:delta1}
64C_0C_1\delta^\alpha\leq \epsilon
\end{align} 
and $R_1=R_1(\epsilon)>0$ such that for every $r\in(0,R_1]$,
\begin{equation}\label{eq:R11}
(2\delta)^{-n} C_2 \omega_{\mathsf{data}}(r)^{\frac{\min\set{2,p}}{p}}\le \frac{\epsilon}{2}, \quad 4\delta^{-n}C_2 \omega_{\mathsf{data}}(r)^{\frac{\min\set{2,p}}{p}}\le  \frac{\delta^n \epsilon_0 \epsilon}{16},
\end{equation}
and
\begin{equation}\label{eq:R12}
\sum^\infty_{i=0} \omega_{\mathsf{data}}(\delta^i r) \le C_\omega \int^{r}_0\frac{\omega_\mathsf{data}(t)}{t}\,dt \leq  \frac{\delta^n  \epsilon_0 \epsilon }{2^{2p+3}C_3\epsilon^{-(p-1)}\delta^{-2n} } .
\end{equation}

Fix $B_{2r}(x_0)\Subset\Omega$ with $r\le \min\{R_0,R_1\}$.  Then for each $j\in \mathbb{N}\cup \set{0}$, we set
\[
r_j:=\delta^j r,
\quad B_j:=B_{r_j}(x_0), 
\quad\hbox{and}\quad E_j=E_j(x_0,r,\delta):=\fint_{B_j}\abs{Du-\overline{Du}^{x_0,r_j}} \,dx,
\]
and   let $w_j$, $v_j\in W^{1,p}(B_j)$ be the weak solutions to the following problems:
\begin{equation*} 
\begin{cases}
-\dv ( \ip{\bar{A}^{x_0,r}Dw_j, Dw_j }^{\frac{p-2}{2}} \bar{A}^{x_0,r} Dw_j)
=- \dv ( \ip{\bar{A}^{x_0,r} D\psi, D\psi }^{\frac{p-2}{2}} \bar{A}^{x_0,r} D\psi) 
& \text{in } B_j, \\
 w_j= u & \text{on } \partial B_j,
\end{cases}
\end{equation*}

\begin{equation*}
\begin{cases}
-\dv ( \ip{\bar{A}^{x_0,r}Dv_j, Dv _j}^{\frac{p-2}{2}} \bar{A}^{x_0,r} Dv_j)
 =0
& \text{in } B_j, \\
 v_j= w_j & \text{on } \partial B_j.
\end{cases}
\end{equation*}

\begin{lemma}\label{lemma0331tue1}
Under the above setting, for any $j\in \mathbb{N}\cup\{0\}$, if
\[
\fint_{2B_j}\abs{Du} \, dx +  \norm{F}_{L^\infty(2B_j)}
^{\frac{1}{p-1}}+\norm{D\psi}_{L^\infty(2B_j)} +1\leq M
\]
and
\[
\fint_{2B_{j+1}}\abs{Du}\, dx\geq \epsilon M
\]
form some $M\geq1$, then
\[
E_{j+2}\leq \frac{\epsilon}{32} E_{j+1} +2^{2p}C_3\delta^{-2n} \epsilon^{-(p-1)}\omega_{\mathsf{data}}(\delta^j r)   M.
\]
 
\end{lemma}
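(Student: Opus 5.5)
We compare $Du$ with $Dv_j$ on $B_j$ and exploit the excess decay of $Dv_j$ from Lemma~\ref{lem:C1alpha}, together with a nondegeneracy (lower bound) of $|Dv_j|$ forced by the hypothesis $\fint_{2B_{j+1}}|Du|\ge \epsilon M$. The term $\frac{\epsilon}{32}E_{j+1}$ will come from the decay of $v_j$ between $B_{j+1}$ and $B_{j+2}$, and the error term from the refined comparison Lemma~\ref{lemma0327fri1} applied on $B_{j+1}$ (a ball of radius $\delta r_j$), which explains the factor $\delta^{-n}$ per shrinking step.

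\textbf{Step 1: sup bound and lower bound for $Dv_j$.} By the first part of Lemma~\ref{lem:C1alpha} and \eqref{eq0410fri2} (applied with $r=r_j$), $\|Dv_j\|_{L^\infty(B_{r_j/2})}\le C_0C_1 M\cdot C$, say $\le 2C_0C_1M$. For the lower bound, write, for $x\in B_{j+1}$, $|Dv_j(x)|\ge |\overline{Dv_j}^{x_0,2r_{j+1}}|-\osc_{2B_{j+1}}Dv_j$. The mean is bounded below by $\fint_{2B_{j+1}}|Du|-\fint_{2B_{j+1}}|Du-Dv_j|-\fint_{2B_{j+1}}|Dv_j-\overline{Dv_j}|$. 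Since $2B_{j+1}\subset B_j$ with volume ratio $(2\delta)^{n}$, Corollary~\ref{coro0304wed1} and \eqref{eq:R11} give $\fint_{2B_{j+1}}|Du-Dv_j|\le (2\delta)^{-n}C_2\omega_{\mathsf{data}}^{\min\{2,p\}/p}M\le \epsilon M/2$, hmm — too large; instead I would use the second inequality in \eqref{eq:R11} (the $4\delta^{-n}$ version with $\epsilon_0\epsilon/16$) to make it $\le \epsilon M/16$. The oscillation of $Dv_j$ on $2B_{j+1}$ is, by the decay in Lemma~\ref{lem:C1alpha} with $\nu=2r_{j+1}$, $\rho=r_j/2$, at most $C_0(4\delta)^\alpha\cdot 2\cdot 2C_0C_1M$-type quantity, which by \eqref{eq:delta1} is $\le \epsilon M/8$ (possibly after adjusting constants; this is where \eqref{eq:delta1} with the factor $64$ is used). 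Hence $\inf_{B_{j+1}}|Dv_j|\ge \epsilon M/2$, i.e. the hypothesis of Lemma~\ref{lemma0327fri1} holds with $N=2/\epsilon$ on $B_\rho=B_{j+1}$, $r=r_j$.

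\textbf{Step 2: splitting $E_{j+2}$.} Using that the mean minimizes up to factor 2,
\[
E_{j+2}\le 2\fint_{B_{j+2}}|Dv_j-\overline{Dv_j}^{x_0,r_{j+2}}|\,dx+2\fint_{B_{j+2}}|Du-Dv_j|\,dx .
\]
The second term is $\le 2\delta^{-n}\fint_{B_{j+1}}|Du-Dv_j|\le 2\delta^{-2n}C_3(2/\epsilon)^{p-1}\omega_{\mathsf{data}}(r_j,x_0)M$ by Lemma~\ref{lemma0327fri1} (ratio $r_j/r_{j+1}=\delta^{-1}$), matching $2^{2p}C_3\delta^{-2n}\epsilon^{-(p-1)}\omega_{\mathsf{data}}(\delta^jr)M$. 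For the first term, Lemma~\ref{lem:C1alpha} with $\nu=r_{j+2}$, $\rho=r_{j+1}$ gives $\le C_0\delta^\alpha\fint_{B_{j+1}}|Dv_j-\overline{Dv_j}^{x_0,r_{j+1}}|\le 2C_0\delta^\alpha\big(E_{j+1}+\fint_{B_{j+1}}|Du-Dv_j|\big)$, and the last integral is again controlled by the same error term; \eqref{eq:delta1} makes $4C_0\delta^\alpha\le\epsilon/32$ (since $C_1\ge1$, $64C_0\delta^\alpha\le\epsilon$ suffices after halving).

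The main obstacle is Step 1: producing a clean pointwise lower bound $|Dv_j|\gtrsim\epsilon M$ on all of $B_{j+1}$ from only an average lower bound on $|Du|$; this requires carefully balancing the $L^1$ comparison (smallness via $R_1$) against the oscillation decay (smallness via $\delta$), and the precise constants in \eqref{eq:delta1}–\eqref{eq:R11} are chosen for exactly this.
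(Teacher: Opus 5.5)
This is essentially the paper's proof. You split $E_{j+2}$ through $Dv_j$, get a pointwise lower bound on $\abs{Dv_j}$ near $x_0$ from $\fint_{2B_{j+1}}\abs{Du}\ge\epsilon M$ via Corollary~\ref{coro0304wed1}, \eqref{eq:R11} and the oscillation decay of Lemma~\ref{lem:C1alpha}, and then apply Lemma~\ref{lemma0327fri1} on $B_{j+1}$ with $r=r_j$. The only differences are inessential: you take $N=2/\epsilon$ instead of $N=4/\epsilon$, and you use the second inequality of \eqref{eq:R11} instead of the first.

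Two constants are off as written, and both are fixed by the paper's choices.

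First, the plain triangle inequality puts coefficient $1$, not $2$, on $\fint_{B_{j+2}}\abs{Dv_j-\overline{Dv_j}^{x_0,r_{j+2}}}\,dx$. Then \eqref{eq:delta1} gives $2C_0\delta^\alpha\le\epsilon/32$. Your $4C_0\delta^\alpha$ only gives $\epsilon/16$, so ``after halving'' is off by a factor $2$.

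Second, take $\rho=r_j$ rather than $r_j/2$ in the decay estimate, which Lemma~\ref{lem:C1alpha} allows. Then
\[
\osc_{2B_{j+1}}Dv_j\le 2C_0(2\delta)^\alpha\fint_{B_j}\abs{Dv_j}\,dx\le 4C_0C_1\delta^\alpha M\le \frac{\epsilon M}{16}
\]
follows directly from \eqref{eq0410fri2} and \eqref{eq:delta1}. This avoids the factor $C_0^2$ in your version, which \eqref{eq:delta1} does not control.
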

\begin{proof}
Integrate the both sides of the following inequality over $B_{j+2}$
\[
\abs{Du-\overline{Du}^{x_0,r_{j+2} }}\leq \abs{Du-Dv_j}+\abs{ Dv_j- \overline{Dv_j}^{x_0,r_{j+2}}}+\abs{  \overline{Dv_j}^{x_0,r_{j+2}}-\overline{Du}^{x_0,r_{j+2} }},
\]
we get 
\[
E_{j+2}\leq \fint_{B_{j+2} } \abs{ Dv_j- \overline{Dv_j}^{x_0,r_{j+2}}} \,dx +2 \fint_{B_{j+2} }   \abs{Du-Dv_j}\,dx.
\]
Then by \eqref{eq0331tue2} and \eqref{eq:delta1}, we obtain
\begin{equation} \label{eq0331tue3}
\begin{aligned}
E_{j+2}&\leq \fint_{B_{j+2} } \abs{ Dv_j- \overline{Dv_j}^{x_0,r_{j+2}}} \,dx +2 \fint_{B_{j+2} }   \abs{Du-Dv_j}\,dx\\
&\leq C_0 \delta^\alpha \fint_{B_{j+1} } \abs{ Dv_j- \overline{Dv_j}^{x_0,r_{j+1}}} \,dx +2\delta^{-n} \fint_{B_{j+1} }   \abs{Du-Dv_j}\, dx\\
&\leq 2C_0 \delta^\alpha \fint_{B_{j+1} } \abs{ Dv_j- \overline{Du}^{x_0,r_{j+1}}} \,dx +2\delta^{-n} \fint_{B_{j+1} }   \abs{Du-Dv_j}\, dx\\
&\leq \frac{\epsilon}{32} E_{j+1}+4\delta^{-n}  \fint_{B_{j+1} }   \abs{Du-Dv_j}\,dx.
\end{aligned}
\end{equation}
To estimate the second integral on the right-hand side above, we observe from Corollary~\ref{coro0304wed1} that
\[
\epsilon  M\leq  \fint_{2B_{j+1}}\abs{Du-Dv_j}\, dx +\fint_{2B_{j+1}}\abs{Dv_j}\, dx\leq (2\delta)^{-n} C_2 \omega_{\mathsf{data}}(\delta^j r)^{\frac{\min\set{2,p}}{p}}M +\fint_{2B_{j+1}}\abs{Dv_j}\,dx,
\]
which together with \eqref{eq:R11} implies 
\[
\fint_{2B_{j+1}}\abs{Dv_j}\, dx \geq \frac{\epsilon M}{2}.
\]
So, by the above,  \eqref{eq0331tue2}, \eqref{eq0410fri2}, and \eqref{eq:delta1},  we have 
\begin{align*}
\inf_{2B_{j+1}}\abs{ Dv_j}
 \geq \fint_{2B_{j+1}}\abs{Dv_j}\, dx -\osc_{2B_{j+1}} \abs{Dv_j} 
&\geq \frac{\epsilon M}{2} -2C_0(2\delta)^{\alpha} \fint_{B_{j}} \abs{Dv_j}\, dx\\
& \geq \frac{\epsilon M}{2} -4 C_0 C_1 \delta^{\alpha}M  \ge \frac{\epsilon M}{4}.
\end{align*}
Therefore, applying Lemma \ref{lemma0327fri1} with the above, we obtain
\[
 \fint_{B_{j+1}}\abs{Du-Dv_j} \,dx\leq C_3 \bigg( \frac{4}{\epsilon}\bigg)^{p-1} \omega_{\mathsf{data}}(\delta^j r)    \bigg(\frac{1}{\delta}\bigg)^n M.
\]
Inserting the above into \eqref{eq0331tue3}, we have the desired estimate.
\end{proof}

We remark that we do not use the condition \eqref{eq:R12} for $r$ in the proof of the above lemma.

Next, we now turn to iteration estimates.

\begin{lemma} \label{lemma0413mon1}
Under the setting in the beginning of the section, suppose that for $k\geq j+1$ with $j\in \mathbb{N}\cup\{0\}$,
\begin{align}\label{eq0410fri4}
\frac{1}{\delta^n\epsilon}E_j +\fint_{2B_j}\abs{Du} \, dx +  \norm{F}_{L^\infty(2B_j)}
^{\frac{1}{p-1}}+\norm{D\psi}_{L^\infty(2B_j)} +1\leq 2\epsilon_0 M
\end{align}
and
\begin{align}\label{eq0410fri5}
\fint_{2B_{l} }\abs{Du}\, dx\geq \epsilon M  \quad\hbox{for every } \ j+1\leq l\leq k-1,
\end{align}
for some $M\geq 1$. Then,
\begin{align} \label{eq0412sun4}
E_k\leq \frac{\delta^n \epsilon_0 \epsilon}{2}M,\quad \fint_{B_{k+1}}\abs{Du -\overline{Du}^{x_0,r_j}} \, dx\leq 3\epsilon\epsilon_0  M,\quad\hbox{and}\quad\fint_{2B_k}\abs{Du}\,dx\leq M.
\end{align}
\end{lemma}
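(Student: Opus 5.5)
We prove \eqref{eq0412sun4} by induction on $k\ge j+1$; the three bounds are established together at each step. Set $\Theta:=2^{2p}C_3\delta^{-2n}\epsilon^{-(p-1)}$. Then \eqref{eq:R12} gives $\Theta\sum_i\omega_{\mathsf{data}}(\delta^i r)\le \delta^n\epsilon_0\epsilon/8$.

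\emph{Base step $k=j+1$.} The hypotheses of Lemma~\ref{lemma0331tue1} are not yet available, so we argue crudely.
\begin{itemize}
\item Since $B_{j+1}\subset B_j$ with volume ratio $\delta^n$, we have $E_{j+1}\le 2\delta^{-n}E_j\le 2\epsilon\cdot 2\epsilon_0M$ by \eqref{eq0410fri4}. This is not yet $\le\delta^n\epsilon_0\epsilon M/2$.
\item We therefore use the comparison with $v_j$ instead. By Lemma~\ref{lem:C1alpha}, \eqref{eq0410fri2}, \eqref{eq:delta1}, Corollary~\ref{coro0304wed1} and \eqref{eq:R11}, applied as in \eqref{eq0331tue3}, we get
\[
E_{j+1}\le 2C_0\delta^\alpha\fint_{B_j}|Dv_j-\overline{Du}^{x_0,r_j}|\,dx+2\delta^{-n}\fint_{B_j}|Du-Dv_j|\,dx\le \tfrac{\epsilon}{32}E_j+\tfrac{\delta^n\epsilon_0\epsilon}{8}M,
\]
with $M$ controlling the data quantity (it is $\le 2\epsilon_0M\le M$).
\item Since $E_j\le \delta^n\epsilon\cdot 2\epsilon_0M$, the first term is tiny, and $E_{j+1}\le\delta^n\epsilon_0\epsilon M/2$ follows.
\item For $\fint_{2B_{j+1}}|Du|$: since $2\delta\le 1/2$, we have $2B_{j+1}\subset B_j$. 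Hence
\[
\fint_{2B_{j+1}}|Du|\le |\overline{Du}^{x_0,r_j}|+(2\delta)^{-n}E_j\le 2^n\fint_{2B_j}|Du|+2^{-n}\epsilon\cdot 2\epsilon_0M\le M,
\]
using $\epsilon_0=2^{-n-2}$.
\end{itemize}

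\emph{Inductive step $k-1\to k$, with $k\ge j+2$.} Assume the bounds for all indices $j+1\le l\le k-1$.
\begin{itemize}
\item For each $l$ with $j\le l\le k-2$, the hypotheses of Lemma~\ref{lemma0331tue1} hold with this $M$. The first hypothesis is the induction bound $\fint_{2B_l}|Du|\le M$, or \eqref{eq0410fri4} when $l=j$; the data terms are monotone in the ball, so they are bounded by $2\epsilon_0M$. The second hypothesis is \eqref{eq0410fri5} at $l+1$.
\item Lemma~\ref{lemma0331tue1} then gives $E_{l+2}\le \tfrac{\epsilon}{32}E_{l+1}+\Theta\omega_{\mathsf{data}}(\delta^l r)M$.
\item Iterating from the base bound on $E_{j+1}$ and summing the geometric factors $(\epsilon/32)^m\le1$ with \eqref{eq:R12} yields
\[
E_k\le (\tfrac{\epsilon}{32})^{k-j-1}E_{j+1}+\Theta M\sum_i\omega_{\mathsf{data}}(\delta^ir).
\]
\item A sharper handling is needed here: first bound $E_{j+1}\le \tfrac{\epsilon}{32}E_j+\delta^n\epsilon_0\epsilon M/8$ as above, then feed this into the iteration. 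Every term then sits below $\delta^n\epsilon_0\epsilon M/2$.
\end{itemize}

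\emph{Second inequality.} Telescope the means:
\[
|\overline{Du}^{x_0,r_{l+1}}-\overline{Du}^{x_0,r_l}|\le\delta^{-n}E_l .
\]
Summing over $l=j,\dots,k$, we get $\delta^{-n}E_j\le 2\epsilon\epsilon_0M$ from \eqref{eq0410fri4}. For $l\ge j+1$ the terms are $\delta^{-n}E_l$, and the recursion above shows $\sum_{l\ge j+1}E_l\lesssim \epsilon E_j+\Theta M\sum\omega_{\mathsf{data}}$. That sum is $\le\delta^n\epsilon_0\epsilon M/4$ after the factor $\delta^{-n}$ (hence the $\delta^{-2n}$ built into \eqref{eq:R12}). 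Adding $E_{k+1}\delta^{-n}$-type control via $B_{k+1}\subset B_k$ then gives the bound $3\epsilon\epsilon_0M$.

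\emph{Third inequality.} Using $2B_k\subset B_{k-1}$ we get
\[
\fint_{2B_k}|Du|\le |\overline{Du}^{x_0,r_j}|+(2\delta)^{-n}\fint_{B_{k-1}}|Du-\overline{Du}^{x_0,r_j}|.
\]
Here $|\overline{Du}^{x_0,r_j}|\le 2^n\cdot2\epsilon_0M=M/2$, and the second term is $\lesssim\epsilon\epsilon_0M$ by the previous step applied at $k-2$. Since $\epsilon\le\epsilon_0$, the total is $\le M$.

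The main obstacle is the bookkeeping in the sums of $E_l$ with the $\delta^{-n}$ losses, specifically ensuring the constants in \eqref{eq:R11}–\eqref{eq:R12} absorb them.
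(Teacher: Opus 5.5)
\textbf{Gap in the third bound.} Most of your scheme is the paper's: induction on $k$, Lemma~\ref{lemma0331tue1} at the indices $l=j,\dots,k-2$, and telescoping of means. Iterating $E_{l+2}\le\frac{\epsilon}{32}E_{l+1}+\Theta\,\omega_{\mathsf{data}}(\delta^l r)M$, rather than summing $2E_{l+2}-E_{l+1}\le 2\Theta\,\omega_{\mathsf{data}}(\delta^l r)M$ as the paper does, is an equivalent way to control $E_k$ and $\sum_{l=j+1}^{k}E_l$. The failure is in the third bound. You write
\[
\fint_{2B_k}|Du|\,dx\le |\overline{Du}^{x_0,r_j}|+(2\delta)^{-n}\fint_{B_{k-1}}|Du-\overline{Du}^{x_0,r_j}|\,dx
\]
and then insert the second claim, $\fint_{B_{k-1}}|Du-\overline{Du}^{x_0,r_j}|\,dx\le 3\epsilon\epsilon_0M$.

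That gives $3\cdot 2^{-n}\epsilon_0\,(\delta^{-n}\epsilon)\,M$, and $\delta^{-n}\epsilon$ is not small. By \eqref{eq:delta1}, $\delta^{-n}\epsilon\ge(64C_0C_1)^{n/\alpha}\epsilon^{1-n/\alpha}$, which is huge since $n/\alpha>1$. So writing ``$\lesssim\epsilon\epsilon_0M$'' silently drops an $\epsilon$-dependent factor, and $\epsilon\le\epsilon_0$ does not yield $\le M$.

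No sharper estimate of $\fint_{B_{k-1}}|Du-\overline{Du}^{x_0,r_j}|$ rescues this decomposition. That average dominates $|\overline{Du}^{x_0,r_{k-1}}-\overline{Du}^{x_0,r_j}|$. For the first jump alone, the hypotheses give nothing better than $\delta^{-n}E_j\le 2\epsilon\epsilon_0M$. The third bound is exactly what supplies the first hypothesis of Lemma~\ref{lemma0331tue1} at indices $l\ge j+1$, so as written the induction does not close.

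\textbf{The fix.} The paper's repair is to pay the volume ratio $(2\delta)^{-n}$ only at the last scale, against $E_{k-1}$, which carries the compensating factor $\delta^n$:
\[
\fint_{2B_k}|Du-\overline{Du}^{x_0,r_j}|\,dx\le(2\delta)^{-n}E_{k-1}+\sum_{l=j}^{k-2}\bigabs{\overline{Du}^{x_0,r_{l+1}}-\overline{Du}^{x_0,r_l}}\le\delta^{-n}\sum_{l=j}^{k-1}E_l\le 3\epsilon\epsilon_0M .
\]
This uses $E_j\le 2\delta^n\epsilon\epsilon_0M$ from \eqref{eq0410fri4} and $\sum_{l=j+1}^{k-1}E_l\le\frac{\delta^n\epsilon_0\epsilon}{2}M$ from the previous induction step.

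Next, $|\overline{Du}^{x_0,r_j}|\le 2^n\fint_{2B_j}|Du|\,dx$. By \eqref{eq0410fri4}, this quantity plus the data terms on $2B_k$ plus $1$ is at most $2^n\cdot 2\epsilon_0M=M/2$. Hence the full left-hand side of the first hypothesis of Lemma~\ref{lemma0331tue1} at index $k$ is at most $M/2+3\epsilon\epsilon_0M\le M$.

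This slack matters. Your check that $\fint_{2B_l}|Du|\,dx\le M$ is not quite enough, since that hypothesis also requires the data terms (up to $2\epsilon_0M$) to fit under $M$.
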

\begin{proof}
We prove the lemma by induction on $k$. First, the case $k=j+1$ holds. Indeed, applying the same estimation in \eqref{eq0331tue3} to $E_{j+1}$ instead of $E_{j+2}$, \eqref{eq0410fri4}, Corollary \ref{coro0304wed1}, \eqref{eq0410fri4}, and \eqref{eq:R11}, we obtain that
\begin{align}
E_{j+1}& \leq \frac{\epsilon}{32}E_j  +4\delta^{-n}  \fint_{B_{j } }   \abs{Du-Dv_j} \,dx \nonumber\\
&\leq \frac{\delta^n \epsilon_0 \epsilon }{16}M+4\delta^{-n}C_2 \omega_{\mathsf{data}}(\delta^j r)^{\frac{\min\set{2,p}}{p}}M \leq \frac{\delta^n \epsilon_0 \epsilon}{8}M. \label{eq0412sun1}
\end{align}
Using the above estimate for $E_{j+1}$ and \eqref{eq0410fri4}, we have
\begin{align*}
\fint_{B_{j+2}}\abs{Du -\overline{Du}^{x_0,r_j}} \, dx&\leq \fint_{B_{j+2}}\abs{Du -\overline{Du}^{x_0,r_{j+1}}} \,dx+\abs{\overline{Du}^{x_0,r_{j+1}}-\overline{Du}^{x_0,r_j}}\\
&\leq \delta^{-n}(E_{j+1}+E_j)\leq 3\epsilon\epsilon_0  M,
\end{align*}
and also obtain that
\begin{align*}
\fint_{2B_{j+1}}\abs{Du} \, dx  &\leq \fint_{2B_{j+1}}\abs{Du-\overline{Du}^{x_0,r_j}} \, dx+\abs{\overline{Du}^{x_0,r_j}}\\
&\leq (2\delta)^{-n}E_j+2^{n}\fint_{2B_j}\abs{Du}\, dx\leq M.
\end{align*}

Now, suppose that the claim \eqref{eq0412sun4} holds up to $k=k_0-1\ge j+1$. Using the Lemma \ref{lemma0331tue1} with $j$ replaced by $l\in\{j,\dots, k_0-2\}$, we obtain
\begin{equation*}
2E_{l+2}-E_{l+1}\leq 2^{2p+1}C_3\epsilon^{-(p-1)}\delta^{-2n} \omega_\mathsf{data}(\delta^l r)M
\end{equation*}
From this, \eqref{eq:R12}, and \eqref{eq0412sun1}, we can make summation with respect to $l$ to get
\begin{equation}\label{eq0412sun5}\begin{aligned} 
\sum^{k_0}_{l=j+1} E_l & =\sum^{k_0-2}_{l=j} (2E_{l+2}-E_{l+1}) +2E_{j+1}-E_{k_0}\\
&\leq 2^{2p+1}C_3\delta^{-2n} \epsilon^{-(p-1)} M  \sum^{k_0-2}_{l=j} \omega_{\mathsf{data}}(\delta^l r)+2 E_{j+1}\leq \frac{\delta^n \epsilon_0 \epsilon}{2}M.
\end{aligned}\end{equation}
So, this proves the first claim in \eqref{eq0412sun4} for $k=k_0$.
Next, by telescoping, we can write
\[
Du -\overline{Du}^{x_0,r_j}=\big( Du -\overline{Du}^{x_0,r_{k_0}}\big) +\big(  \overline{Du}^{x_0,r_{k_0}}-\overline{Du}^{x_0,r_{k_0-1}} \big)+\cdots + \big(  \overline{Du}^{x_0,r_{j+1}}-\overline{Du}^{x_0,r_{j}} \big).
\]
By taking the average over $B_{k_0+1}$ and using \eqref{eq0410fri4} and \eqref{eq0412sun5}, we get
\[
\fint_{B_{k_0+1}} \abs{Du -\overline{Du}^{x_0,r_j}} \, dx\leq \sum^{k_0}_{l=j}\fint_{B_{l+1}} \abs{Du -\overline{Du}^{x_0,r_l}} \, dx\leq \delta^{-n}\sum^{k_0}_{l=j} E_l\leq 3\epsilon\epsilon_0  M.
\]
This proves the second claim in \eqref{eq0412sun4}.
Finally, by similar estimate as in above, we observe that
\begin{align*}
\fint_{2B_{k_0}} \abs{Du -\overline{Du}^{x_0,r_j}}  \, dx
\leq (2\delta)^{-n}E_{k_0-1}+ \sum^{k_0-2}_{l=j}\fint_{B_{l+1}} \abs{Du -\overline{Du}^{x_0,r_l}}\, dx
& \leq(2\delta)^{-n}\sum^{k_0-1}_{l=j} E_l\\
&\leq 2^{-n}3\epsilon\epsilon_0  M.
\end{align*}
Moreover, by \eqref{eq0410fri4} with $\epsilon_0=2^{-n-2}$, we have
\[
\fint_{2B_j} \abs{Du}\,dx\leq 2^{-n-1} M.
\]
Therefore, we obtain that
\[
\fint_{2B_{k_0}}\abs{Du} \,dx\leq \fint_{2B_{k_0}}\abs{Du  -\overline{Du}^{x_0,r_j}}\, dx+2^n \fint_{2B_j}\abs{Du} \,dx\leq   2^{-n}3\epsilon\epsilon_0  M+  2^{-1} M\leq M.
\]
This proves the third claim in \eqref{eq0412sun4} for $k=k_0$. Thus, we complete the proof.
\end{proof}

Now, we are ready to obtain the local boundedness of $Du$.

\begin{proposition} \label{prop0413mon1}
Let $u\in \mathcal{A}_\psi$ be the solution to the variational inequality \eqref{main_vari ineq}. Suppose the mean oscillation of the $\mathsf{data}$ satisfies the Dini condition with constant $\kappa>0$. 
Then $Du$ is locally bounded in $\Omega$. Moreover,  for any Lebesgue point $x_0\in\Omega$ of $|Du|$ and $B_{2r}(x_0)\Subset \Omega$ with $r\le \min\{R_0,R_1\}$, where $R_1>0$ is determined by the choice $\epsilon=\epsilon_0=2^{-n-2}$,  we have
\begin{equation}\label{eq0413mon1}
|Du(x_0)|\leq C\bigg( \fint_{B_{2r}(x_0)} \abs{Du} \,dx  +   \norm{F}_{L^\infty(B_{2r}(x_0))}
^{\frac{1}{p-1}}+\norm{D\psi}_{L^\infty(B_{2r}(x_0))} +1   \bigg)
\end{equation}
where $C=C(n,p,\Lambda,\omega_\mathsf{data},\kappa)\geq 1$.
\end{proposition}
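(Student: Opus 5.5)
Fix $\epsilon=\epsilon_0$, so that $\delta$ and $R_1$ in \eqref{eq:delta1}--\eqref{eq:R12} depend only on $n,p,\Lambda,\kappa,\omega_{\mathsf{data}}$. Let $x_0$ be a Lebesgue point of $|Du|$ and $B_{2r}(x_0)\Subset\Omega$ with $r\le\min\{R_0,R_1\}$. Write
\[
M_0:=\fint_{B_{2r}(x_0)}\abs{Du}\,dx+\norm{F}_{L^\infty(B_{2r}(x_0))}^{\frac{1}{p-1}}+\norm{D\psi}_{L^\infty(B_{2r}(x_0))}+1 .
\]
(Before this, I would note that $D\psi$ and $F$ are locally bounded, since $\mathsf{DMO}$-type functions are uniformly continuous by Remark~\ref{rmk:Dini}(ii). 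Hence the right-hand side is finite, and local boundedness of $Du$ follows from \eqref{eq0413mon1} once it holds at a.e. $x_0$.) Set $M:=C_*M_0$ with $C_*$ a large constant to be fixed. Since $E_0\le 2\fint_{B_0}|Du|\le 2^{n+1}M_0$, the choice $C_*\ge (\delta^{-n}\epsilon_0^{-1}2^{n+1}+1)/(2\epsilon_0)$ makes hypothesis \eqref{eq0410fri4} hold with $j=0$. The aim is to show $\fint_{B_k}|Du|\,dx\lesssim M$ for all $k$; letting $k\to\infty$ then gives $|Du(x_0)|\le CM$.

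The argument is a stopping-time dichotomy on the sequence $\fint_{2B_l}|Du|$, $l\ge1$.

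\emph{Case 1: $\fint_{2B_l}|Du|\,dx\ge\epsilon M$ for all $l\ge1$.} Lemma~\ref{lemma0413mon1} applies with $j=0$ for every $k\ge1$. Its third claim gives $\fint_{2B_k}|Du|\le M$ for all $k$, and hence $|Du(x_0)|\le M$.

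\emph{Case 2: some index fails.} Let $k_1\ge1$ be the first index with $\fint_{2B_{k_1}}|Du|<\epsilon M$. For $k\le k_1$ we have the bound $\fint_{2B_k}|Du|\le M$ from Lemma~\ref{lemma0413mon1} (applied up to $k=k_1$, since \eqref{eq0410fri5} holds on $1\le l\le k_1-1$). I then restart the lemma at $j=k_1$ and check \eqref{eq0410fri4} there:
\[
E_{k_1}\le 2\fint_{B_{k_1}}|Du|\le 2^{n+1}\epsilon M,
\]
so
\[
\tfrac{1}{\delta^n\epsilon}E_{k_1}+\fint_{2B_{k_1}}|Du|+\dots
\]
is \emph{not} obviously $\le 2\epsilon_0M$, because of the factor $\delta^{-n}$. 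This is the main obstacle: the quantities at the restart are small, but not small enough against the constant $\delta^{-n}\epsilon^{-1}$.

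My first attempt to get around this is to use the smallness of $E_{k_1}$ supplied by the first claim of \eqref{eq0412sun4}, namely $E_{k_1}\le\frac{\delta^n\epsilon_0\epsilon}{2}M$. Then $\frac{1}{\delta^n\epsilon}E_{k_1}\le\epsilon_0M/2$. The remaining terms are $\fint_{2B_{k_1}}|Du|<\epsilon_0M$ and the data norms plus $1$, which are at most $M_0\le \epsilon_0M/4$ once $C_*$ is large. (The data norms on $2B_{k_1}$ are bounded by those on $B_{2r}$, since $2B_{k_1}\subset B_{2r}$.) Altogether \eqref{eq0410fri4} holds at $j=k_1$ with the same $M$, so the whole argument can be repeated from $k_1$ onward.

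Iterating produces a (finite or infinite) sequence of stopping indices $k_1<k_2<\cdots$. Between consecutive stopping indices, and after the last one, Lemma~\ref{lemma0413mon1} gives $\fint_{2B_k}|Du|\le M$. At the stopping indices themselves we have $\fint_{2B_{k_i}}|Du|<\epsilon M$. Hence $\fint_{2B_k}|Du|\le M$ for every $k$. Since $x_0$ is a Lebesgue point, $|Du(x_0)|=\lim_k\fint_{B_k}|Du|\le 2^nM=2^nC_*M_0$, which is \eqref{eq0413mon1}. The constant depends on $\omega_{\mathsf{data}}$ only through $\delta$ and $R_1$.

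The point I expect to need care is the restart step. It requires the first claim of \eqref{eq0412sun4} exactly at the stopping index $k_1$, which is covered because the lemma's conclusion at $k$ only needs \eqref{eq0410fri5} up to $k-1$. I also need to handle a stopping index occurring immediately, $k_1=j+1$. In that case I would use \eqref{eq0412sun1} directly.
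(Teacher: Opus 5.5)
Your proof is correct. It organizes the stopping-time argument differently from the paper, and both routes rest on Lemma~\ref{lemma0413mon1}.

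\textbf{The paper's route.} The paper introduces the combined quantity $F_l=\frac{1}{\delta^n\epsilon_0}E_l+\fint_{2B_l}|Du|\,dx$ and splits into two cases.
\begin{enumerate}
\item[(a)] If $F_l\le\frac32\epsilon_0M$ for infinitely many $l$, the Lebesgue point property bounds $|Du(x_0)|$ directly along that subsequence.
\item[(b)] Otherwise there is a last index $j_0$ with $F_{j_0}\le\frac32\epsilon_0M$. Then \eqref{eq0410fri4} holds at $j_0$. For $l>j_0$, the lower bound $F_l>\frac32\epsilon_0M$ together with the excess bound $E_l\le\frac{\delta^n\epsilon_0^2}{2}M$ from Lemma~\ref{lemma0413mon1} forces $\fint_{2B_l}|Du|\,dx\ge\epsilon_0M$. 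So \eqref{eq0410fri5} propagates automatically, and a single chain of applications of the lemma starting at $j_0$ covers all smaller scales.
\end{enumerate}

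\textbf{Your route.} You stop at each first failure of \eqref{eq0410fri5} and restart the lemma there. At the stopping index you re-verify \eqref{eq0410fri4} with the same $M$, using the first conclusion of \eqref{eq0412sun4}. This is legitimate because the lemma's conclusion at index $k$ needs \eqref{eq0410fri5} only for $j+1\le l\le k-1$, and nothing at all when $k=j+1$. Your arithmetic $\frac{\epsilon_0}{2}M+\epsilon_0M+\frac{\epsilon_0}{4}M\le2\epsilon_0M$ closes the restart without any loss in $M$, so the constants do not degenerate over possibly infinitely many restarts.

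\textbf{What each buys.} The paper needs only one starting index and no induction over restarts. It pays with the ``infinitely many / last index'' dichotomy and the auxiliary quantity $F_l$. Your version avoids $F_l$ altogether. It also gives the slightly stronger uniform statement $\fint_{2B_k}|Du|\,dx\le M$ at every scale $k$, not just control along a subsequence plus the limit.
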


\begin{remark}
By the density of Lebesgue point and a standard normalization argument, we can derive the following Lipschitz estimate
\[
\norm{Du}_{L^\infty(B_r(x_0))}\leq C\bigg( \fint_{B_{2r}(x_0)} \abs{Du} \,dx  +   \norm{F}_{L^\infty(B_{2r}(x_0))}
^{\frac{1}{p-1}}+\norm{D\psi}_{L^\infty(B_{2r}(x_0))}   \bigg)
\]
for any $B_{2r}(x_0)\Subset\Omega$ with  sufficiently small $r>0$.
\end{remark}

\begin{proof}
We recall the setting in the beginning of the section when $\epsilon=\epsilon_0$, where $x_0\in \Omega$  is a Lebesgue point of $\abs{Du}$. 
We then define 
\[
M:=\frac{\delta^{-2n}}{3\epsilon_0^3}\bigg(\fint_{B_{2r}(x_0)} \abs{Du} \,dx  +   \norm{F}_{L^\infty(B_{2r}(x_0))}
^{\frac{1}{p-1}}+\norm{D\psi}_{L^\infty(B_{2r}(x_0))}+1   \bigg),
\]
\begin{equation*}
F_l:=\frac{1}{\delta^n \epsilon_0} E_l+\fint_{2B_l} \abs{Du} \,dx
\quad \text{for }\ l\in \mathbb{N}. 
\end{equation*} 
Note that
\begin{align*}
F_1\leq\bigg(  \frac{2^n}{\epsilon_0\delta^{2n}}+\frac{1}{\delta^{n}} \bigg)\bigg(\fint_{B_{2r}(x_0)} \abs{Du}\, dx \bigg)
&\leq\frac{\delta^{-2n}}{\epsilon_0^2}\bigg(  \frac{1}{ 4}+\epsilon_0^2 \delta^n \bigg)\bigg(\fint_{B_{2r}(x_0)} \abs{Du} \,dx \bigg)\\
&\leq\frac{\delta^{-2n}}{2\epsilon_0^2} \bigg(\fint_{B_{2r}(x_0)} \abs{Du} \,dx \bigg)\leq \frac{3}{2}\epsilon_0 M.
\end{align*}
Then, we can assume that there exists $j_0\in \mathbb{N}$ such that
\begin{equation}\label{eq0413mon2}
F_{j_0}\leq \frac{3}{2}\epsilon_0 M\qquad\hbox{and}\qquad F_{l}>\frac{3}{2}\epsilon_0 M\;~\;\hbox{for all $l>j_0$}.
\end{equation}
Because, if it is not true, one can find a sequence $\set{j_l}_{l\in\mathbb{N}}$ with $j_1<j_2<\ldots$ such that
\[
F_{l_j}\leq \frac{3}{2}\epsilon_0 M\quad\hbox{for all $j\in \mathbb{N}$},
\]
which together with $x_0$ is the Lebesgue point of $Du$, it implies that
\begin{equation}\label{eq0413mon3}
\abs{Du(x_0)}=\lim_{l\rightarrow \infty} \abs{\overline{Du}^{x_0,r_j}}\leq \liminf_{j\rightarrow \infty} F_{l_j}\leq \frac{3}{2}\epsilon_0 M.
\end{equation}
This prove \eqref{eq0413mon1}. Thus, we now assume \eqref{eq0413mon2}. Then, for every $l> j_0$,
\[
F_l >\frac{3}{2}\epsilon_0 M 
\]
From Lemma \ref{lemma0413mon1} with $j=j_0$ and $k=j_0+1$, 
we have that
\[
E_{j_0+1} \leq \frac{\delta^n\epsilon_0^2}{2}M.
\]
It implies that
\[
\fint_{2B_{j_0+1}}\abs{Du}\, dx= F_{j_0+1} -\frac{1}{\delta^n \epsilon_0} E_{j_0+1}\geq  \frac{3}{2}\epsilon_0 M -\frac{1}{2}\epsilon_0 M\geq \epsilon_0 M.
\]
From this, we conclude that \eqref{eq0410fri5} is satisfied for $l=j_0+1$. Then by Lemma \ref{lemma0413mon1} again with $k=j_0+1$, which implies that
\[
E_{j_0+2} \leq \frac{\delta^n\epsilon_0^2}{2}M.
\]
By repeating the same process, \eqref{eq0410fri5} holds for every $l>j_0$ and so
\[
\fint_{2B_k}\abs{Du} \,dx\leq M\quad\hbox{for every $k> j_0$}.
\]
Therefore, similar to \eqref{eq0413mon3}, we prove that $\abs{Du(x_0)}\leq M$. This completes the proof.
\end{proof}

\subsection{Proof of Theorem \ref{main_theorem}}

Fix any $\Omega''\Subset\Omega'\Subset \Omega$. From Proposition \ref{prop0413mon1}, we see that 
$Du\in L^\infty(\Omega')$. 
Define 
\[
M:=\frac{2^{n+1}}{\epsilon_0}\big( \norm{Du}_{L^\infty(\Omega')}  +   \norm{F}_{L^\infty(\Omega')}
^{\frac{1}{p-1}}+\norm{D\psi}_{L^\infty(\Omega')} +1   \big).
\]
For $\epsilon\in (0,\epsilon_0]$, we choose $\delta=\delta(\epsilon)$ as in \eqref{eq:delta1} and $\hat R_{1}=\hat R_{1}(\epsilon)$ such that for every $r\in(0,\hat R_1]$,
\begin{equation}\label{eq:tR11}
(2\delta)^{-n} C_2 \omega_{\mathsf{data}}(r)^{\frac{\min\set{2,p}}{p}}\le \frac{\delta^{2n} \epsilon}{2}, 
\qquad 4\delta^{-n}C_2 \omega_{\mathsf{data}}(r)^{\frac{\min\set{2,p}}{p}}\le  \frac{\delta^{3n} \epsilon \epsilon_0 }{16},
\end{equation}
\begin{equation}\label{eq:tR12}
2^{2p}C_3\delta^{-2np}  \epsilon^{-(p-1)}\omega_{\mathsf{data}}(r)   \frac{\epsilon_0}{2^{n+1}} \le \frac{\delta^n\epsilon \epsilon_0}{2},
\end{equation}
and \eqref{eq:R12} is satisfied.
We notice from  \eqref{eq:tR11} that the conditions \eqref{eq:R11} is satisfied with $\epsilon$ replaced by  $\delta^{2n}\epsilon$ respectively, for every $r\in(0,\hat R_1]$.

Let $x_0\in \Omega''$ be a Lebesgue point of $Du$ and $B_{2r}(x_0)\Subset\Omega'$ with $r\le \min\{R_0,\hat R_1\}$. Then we immediately obtain
\begin{equation}\label{eq:mainproof1}
\fint_{B_{2r}(x_0)}\abs{Du} \, dx +  \norm{F}_{L^\infty(B_{2r}(x_0))}
^{\frac{1}{p-1}}+\norm{D\psi}_{L^\infty(B_{2r}(x_0))} +1\leq \frac{\epsilon_0}{2^{n+1}} M,
\end{equation}
\begin{equation}\label{eq:mainproof2}
\fint_{B_{\rho}(x_0)}\abs{Du} \, dx \le \frac{\epsilon_0}{2^{n+1}} M 
\quad \text{for all } \rho\in (0, 2r].
\end{equation}
Note that if 
\[
\fint_{B_{\delta^{2}r}(x_0)} |Du|\,dx \le   \frac{\delta^n\epsilon\epsilon_0}{2} M,
\]
then
\[
\fint_{B_{\delta^{2}r}(x_0)} |Du-\overline{Du}^{x_0,\delta^2r}|\,dx \le  \delta^n \epsilon\epsilon_0 M.
\]
On the other hand, suppose that
\[
\fint_{B_{\delta^{2}r}(x_0)} |Du|\,dx \ge    \frac{\delta^n\epsilon\epsilon_0}{2} M,
\]
which implies
\[
\fint_{B_{2\delta r}(x_0)} |Du|\,dx  \ge \bigg(\frac{\delta}{2}\bigg)^n \fint_{B_{\delta^2 r}(x_0)} |Du|\,dx \ge  \frac{\delta^{2n}  \epsilon \epsilon_0  }{2^{n+1}} M,
\]
Then, by Lemma \ref{lemma0331tue1} with $j=0$ and with $M$ and $\epsilon$ replaced by $\frac{\epsilon_0}{2^{n+1}} M$ and $\delta^{2n}  \epsilon$ respectively, \eqref{eq:mainproof2}, and \eqref{eq:tR12},  we obtain
\[\begin{split}
&\fint_{B_{\delta^2 r}(x_0)} |Du-\overline{Du}^{x_0,\delta^2 r}|\,dx \\
& \leq \frac{\delta^{2n} \epsilon}{32} \fint_{B_{\delta r}(x_0)} |Du-\overline{Du}^{x_0,\delta r}|\,dx +2^{2p}C_3\delta^{-2n} ( \delta^{2n} \epsilon)^{-(p-1)}\omega_{\mathsf{data}}(\delta^j r)   \frac{\epsilon_0}{2^{n+1}} M \\
& \leq \frac{ \delta^{2n} \epsilon \epsilon_0}{2^{n+5}} M+ \frac{ \delta^{n} \epsilon \epsilon_0}{2}  M \le   \delta^{n} \epsilon \epsilon_0  M.
\end{split}
\]
Therefore, since $r\le \min\{R_0,\tilde R_1\}$ is arbitrary, we have proved that
\begin{equation}\label{eq:mainproof3}\begin{split}
&\fint_{B_\rho(x_0)}|Du-\overline{Du}^{x_0,\rho}|\, dx  \le \delta^{n} \epsilon \epsilon_0  M \\
&\qquad \text{for all Lebesgue points $x_0\in \Omega''$ of $Du$ and all $\rho\in(0, R_2]$,}
\end{split}\end{equation}
where $R_2=R_2(\epsilon):= \frac{\delta^2}{2} \min\{R_0,\hat R_1, \mathrm{dist}(\Omega'',\partial \Omega')\}$.

We recall the notation in the beginning of the section with $r=R_2$ and any Lebesgue point $x_0\in \Omega$. Note that $r$ depends on $\epsilon$.  
Then we claim that
\[
\textbf{(Claim)}\qquad \bigabs{ \overline{Du}^{x_0 ,r_2}  -\overline{Du}^{x_0 ,r_{k+1}}} \leq (2+2^{n+1})\epsilon M 
\quad \text{for all $\ k> 3$.}
\]
If the claim holds, then $Du$ can be represented by a uniformly continuous function in $\Omega''$. Indeed,  if $x, y\in \Omega''$ be any Lebesgue points of $Du$ with $\abs{x-y}$ sufficiently small so that $\abs{B_{\delta^2r}(x)\backslash B_{\delta^2r}(y)}\leq \epsilon\abs{B_{\delta^2r}}$, then
\begin{align*}
&\abs{Du(x)-Du(y)} \\
&\leq \limsup_{k\rightarrow\infty}\bigabs{ \overline{Du}^{x ,r_2}  -\overline{Du}^{x ,r_{k+1}}} +\limsup_{k\rightarrow\infty}\bigabs{ \overline{Du}^{y ,r_2}  -\overline{Du}^{y ,r_{k+1}}} + \bigabs{ \overline{Du}^{x ,r_2}  -\overline{Du}^{y ,r_{2}}}\\
&\leq 2 (2+2^{n+1})\epsilon M+\frac{1}{\abs{B_{\delta^2r}}} \big(\abs{B_{\delta^2r}(x)\backslash B_{\delta^2r}(y)} +\abs{B_{\delta^2r}(y)\backslash B_{\delta^2r}(x)} \big)\norm{Du}_{L^\infty(\Omega')} \\
&\leq 2^{n+3} \epsilon M.
\end{align*}
Therefore, we conclude that $Du$ has a uniformly continuous representative in $\Omega''$. We complete the proof verifying the claim.

\begin{proof}[Proof of the claim]
We notice  from \eqref{eq:mainproof1} and \eqref{eq:mainproof3} that the first assumption \eqref{eq0410fri4} in Lemma \ref{lemma0413mon1}  holds for all $j\ge 2$ and $E_l \le \delta^n \epsilon \epsilon_0 M$ for all $l\ge 2$. Set
\[
\mathcal{I}:= \bigset{i\in \mathbb{N}~:~ F_i<2\epsilon M},
\quad F_{i}:= \frac{1}{\delta^n \epsilon_0} E_i+\fint_{2B_i} \abs{Du} \,dx.
\]
We first suppose that $\mathcal{I}\cap \set{  3,\ldots,k-1}=\emptyset$. Then we have
\begin{equation}\label{eq0414tue1}
\fint_{2B_l}\abs{Du} \, dx= F_l-\frac{1}{\delta^n\epsilon_0} E_l\geq 2\epsilon M-\epsilon M\geq \epsilon M\quad\hbox{for all $\ 3\leq l\leq k-1$}.
\end{equation}
Thus, the second assumption \eqref{eq0410fri5} in Lemma \ref{lemma0413mon1} holds when $j=2$, so we have
\[
\bigabs{ \overline{Du}^{x_0 ,r_2}  -\overline{Du}^{x_0 ,r_{k+1}}}\leq 3\epsilon\epsilon_0  M\leq (2+2^{n+1})\epsilon M
\]
We next suppose that $\mathcal{I}\cap \set{  3,\ldots,k-1}\neq\emptyset$. Let  $k' =\min \set {\mathcal{I}\cap \set{  3,\ldots,k-1}}$ and $j' =\max \set {\mathcal{I}\cap\set{ 3,\ldots,k-1}}$. Now we consider the following four cases:

(i) $k'=3 $:  since $3\in \mathcal{I}$,
\begin{align*}
\bigabs{ \overline{Du}^{x_0 ,r_{2}} }= \bigabs{ \overline{Du}^{x_0 ,r_{2}} -\overline{Du}^{x_0 ,r_{3}} }+\bigabs{ \overline{Du}^{x_0 ,r_{3}} }\leq \delta^{- n} E_2+ 2^{n}\fint_{2B_3} \abs{Du} \,dx
& \leq \epsilon_0 \epsilon M+2^{n+1}\epsilon M\\
& \le (1+2^{n+1})\epsilon M.
\end{align*}

(ii) $k'>3$:  this means that $\mathcal{I}\cap \set{  3,\ldots,k'-1}=\emptyset$.  Thus, by \eqref{eq0414tue1} when $k=k'-1> 3$, we can apply Lemma \ref{lemma0413mon1} to $j=2$ and $k=k'-1$, so that 
\[
\bigabs{ \overline{Du}^{x_0 ,r_2}  -\overline{Du}^{x_0 ,r_{k'}}}\leq 3\epsilon\epsilon_0  M \le \epsilon M.
\]
Hence, since $k'\in \mathcal{I}$, we obtain
\[
\bigabs{ \overline{Du}^{x_0 ,r_2}  }\leq \bigabs{ \overline{Du}^{x_0 ,r_2}  -\overline{Du}^{x_0 ,r_{k'}}}+\bigabs{  \overline{Du}^{x_0 ,r_{k'}}}\leq 3\epsilon\epsilon_0  M+ 2^n F_{k'}\leq (1+2^{n+1})\epsilon M.
\]

(iii) $j'=k-1$: Since $k-1\in\mathcal{I}$
\begin{align*}
\bigabs{ \overline{Du}^{x_0 ,r_{k+1}} }\leq \delta^{-n}\big( E_k+E_{k-1}\big)+ 2^{n}\fint_{2B_{k-1}} \abs{Du} \,dx \leq 2\epsilon_0 \epsilon M+2^{n+1}\epsilon M \le (1+2^{n+1})\epsilon M
\end{align*}

(iv) $j' <k-1$: this implies that $\mathcal{I}\cap \set{  j'+1,\ldots,k-1}=\emptyset$ and \eqref{eq0414tue1}. Thus, by \eqref{eq0414tue1}, we can apply Lemma \ref{lemma0413mon1} to $j=j'$, we obtain
\[
\bigabs{ \overline{Du}^{x_0 ,r_{j'}}  -\overline{Du}^{x_0 ,r_{k+1} }}\leq 3\epsilon\epsilon_0  M \le \epsilon M
\]
Hence, since $j'\in \mathcal{I}$, we obtain

\[
\bigabs{ \overline{Du}^{x_0 ,r_{k+1}}  }\leq \bigabs{ \overline{Du}^{x_0 ,r_{j'}}  -\overline{Du}^{x_0 ,r_{k+1}}}+\bigabs{  \overline{Du}^{x_0 ,r_{j'}}}\leq \epsilon  M+ 2^n \fint_{2B_{j'}} \abs{Du} \,dx \leq (1+2^{n+1})\epsilon M.
\]

Therefore, by combining all of the cases, we have
\[
 \bigabs{ \overline{Du}^{x_0 ,r_{2}}  -\overline{Du}^{x_0 ,r_{k+1}}}\leq \bigabs{  \overline{Du}^{x_0 ,r_{2}}}+\bigabs{  \overline{Du}^{x_0 ,r_{k+1}}}\leq   (2+2^{n+1})\epsilon M.
\]
This completes the claim.
\end{proof}


\end{document}